
\documentclass[10pt,centertags,reqno]{amsart}

\usepackage{amsmath}
\usepackage{amsthm}
\usepackage{amssymb}
\usepackage{mathrsfs}
\usepackage{exscale}
\usepackage[arrow, matrix, curve]{xy}

\parskip 1 ex

\setcounter{tocdepth}{2}
\setcounter{secnumdepth}{2} 
\numberwithin{equation}{section}

\allowdisplaybreaks


\theoremstyle{definition}
	\newtheorem{definition}{Definition}
	\newtheorem*{definition*}{Definition}
	
	\numberwithin{definition}{section}
\theoremstyle{plain}
	\newtheorem{lemma}[definition]{Lemma}
	\newtheorem{proposition}[definition]{Proposition}
	\newtheorem{theorem}{Theorem}
	\newtheorem*{theorem*}{Theorem}
	\newtheorem{auxthm}[definition]{Theorem}
	\newtheorem{corollary}[definition]{Corollary}

	\newtheorem*{claim*}{Claim}
\theoremstyle{remark}

	\newtheorem*{conjecture}{Conjecture}


\DeclareMathOperator{\dop}{d}

\DeclareMathOperator{\xop}{\times}


\renewcommand{\bf}{\textbf}


\newcommand{\al}{\alpha}

\newcommand{\Ga}{\Gamma}

\newcommand{\io}{\iota}

\newcommand{\om}{\omega}
\newcommand{\Om}{\Omega}

\newcommand{\sig}{\sigma}

\renewcommand{\th}{\theta}

\newcommand{\vphi}{\varphi}


\newcommand{\C}{\mathbb{C}}

\newcommand{\R}{\mathbb{R}}

\newcommand{\Xca}{\mathcal{X}}


\newcommand{\gfr}{\mathfrak{g}}
\newcommand{\hfr}{\mathfrak{h}}
\newcommand{\kfr}{\mathfrak{k}}
\newcommand{\pfr}{\mathfrak{p}}



\newcommand{\map}[3]{#1\colon#2\rightarrow#3}
\newcommand{\mapin}[3]{#1\colon#2\hookrightarrow#3}
\newcommand{\lmap}[3]{#1\colon#2\lto#3}
\newcommand{\xto}{\xrightarrow}
\newcommand{\lto}{\longrightarrow}

\newcommand{\deq}{\mathrel{\mathop:}=}

\newcommand{\jump}{\hspace{5mm}}

\newcommand{\lmapcong}[3]{#1\colon#2\xrightarrow{\quad\cong\quad}#3}


\newcommand{\X}{\Xca}
\renewcommand{\l}{\langle}
\renewcommand{\r}{\rangle}

\renewcommand{\L}{\mathfrak}

\renewcommand{\L}{\mathfrak}

\begin{document}

\title[Surjectivity of comparison map]{Surjectivity of the comparison map in bounded cohomology for Hermitian Lie groups}

\author[T. Hartnick]{Tobias Hartnick}
\address{Departement Mathematik, ETH Z\"urich, R\"amistrasse 101, 8092 Z\"urich, Switzerland}
\email{hartnick@math.ethz.ch}

\author[A. Ott]{Andreas Ott}
\address{Departement Mathematik, ETH Z\"urich, R\"amistrasse 101, 8092 Z\"urich, Switzerland}
\email{andreas@math.ethz.ch}

\thanks{The first author was partially supported by SNF grant PP002-102765. The second author was supported by ETH Research Grant TH-01 06-1.}

\subjclass[2000]{22E41, 53C35, 55N35, 57T15}

\begin{abstract}
	We prove surjectivity of the comparison map from continuous bounded cohomology to continuous cohomology for Hermitian Lie groups with finite center. For general semisimple Lie groups with finite center, the same argument shows that the image of the comparison map contains all the even generators. Our proof uses a Hirzebruch type proportionality principle in combination with Gromov's results on boundedness of primary characteristic classes and classical results of Cartan and Borel on the cohomology of compact homogeneous spaces.
\end{abstract}

\maketitle

\section{Introduction}

Let $G$ be a connected semisimple Lie group without compact factors. $G$ is called \emph{Hermitian} if the symmetric space associated with $G$ admits a $G$-invariant complex structure. We denote by $H^\bullet_{cb}(G; \R)$ the continuous bounded cohomology ring of $G$ and by $H^\bullet_{c}(G; \R)$ the continuous cohomology ring of $G$, both with trivial real coefficients (see \cite{BC,BoWa} or Sections~\ref{SubsecExistenceUniversal}~and~\ref{SubsecGromov} for the definitions). These two rings are related by a natural \emph{comparison map} $H^\bullet_{cb}(G; \R) \to H^\bullet_{c}(G; \R)$ \cite[Def.\,9.2.1]{BC}. The purpose of this article is to prove the following result.

\begin{theorem} \label{MainThm}
	Let $G$ be a connected semisimple Lie group without compact factors and with finite center. If $G$ is Hermitian, then the comparison map
$H^\bullet_{cb}(G;\R) \to H^\bullet_{c}(G;\R)$ is surjective. 
\end{theorem}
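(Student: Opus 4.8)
The plan is to compute both sides of the comparison map and to show that the image contains a generating set of the target ring. Since the comparison map $H^\bullet_{cb}(G;\R)\to H^\bullet_c(G;\R)$ is a homomorphism of graded rings, its image is a graded subalgebra, so it suffices to exhibit bounded representatives for a set of ring generators of $H^\bullet_c(G;\R)$. I would begin by making $H^\bullet_c(G;\R)$ concrete. Writing $\gfr=\kfr\oplus\pfr$ for a Cartan decomposition, letting $K<G$ be a maximal compact subgroup and $X=G/K$ the associated symmetric space of noncompact type, van Est's theorem computes the continuous cohomology by invariant forms, $H^\bullet_c(G;\R)\cong(\Lam^\bullet\pfr^*)^K$. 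Because $\pfr$ is isomorphic as a $K$-module to the tangent space of the compact dual symmetric space $\hat X=G_u/K$, this invariant-forms model is canonically identified with the de Rham cohomology $H^\bullet(\hat X;\R)$, so the whole problem is transported to the compact dual.

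Next I would pin down a generating set. As $G$ is Hermitian, $K$ has maximal rank in $G$, so $\hat X$ is an equal-rank compact homogeneous space; by the classical results of Cartan and Borel its cohomology $H^\bullet(\hat X;\R)$ is then concentrated in even degrees and is generated, as a ring, by the images of the characteristic classes of the principal $K$-bundle $G_u\to\hat X$ under the Chern–Weil homomorphism $(\mathrm{Sym}^\bullet\kfr^*)^K\to H^\bullet(\hat X;\R)$. Thus every generator of $H^\bullet_c(G;\R)$ is represented, on the compact dual, by a characteristic form built from the curvature of the canonical connection. The Hirzebruch type proportionality principle then lets me transport these representatives to the noncompact dual $X$: the canonical connections on $G_u\to\hat X$ and on $G\to X$ have curvatures differing only by sign on $\pfr$, so under the identification of the previous step each generator corresponds to a $G$-invariant Chern–Weil characteristic form on $X$.

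It then remains to show that these characteristic forms admit bounded cocycle representatives. Here I would invoke Gromov's boundedness of primary characteristic classes: geodesic straightening on the nonpositively curved, hence $\mathrm{CAT}(0)$, space $X$ sends a characteristic form $\omega$ to the continuous $G$-cocycle $c_\omega(g_0,\dots,g_n)=\int_{\Delta(g_0o,\dots,g_no)}\omega$, where $\Delta$ denotes the geodesic simplex on the indicated vertices and $o$ the basepoint, and Gromov's theorem guarantees that for characteristic forms this straightened cocycle is bounded. Hence each generator lies in the image of the comparison map. Since the generators lie in the image and the image is a subring, the comparison map is surjective, proving the theorem. In parallel, for a general semisimple $G$ the same argument shows that the even generators — exactly the characteristic classes — are hit, which is the second assertion announced in the abstract.

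The main obstacle is the boundedness step and its interface with the other two inputs. Gromov's theorem must be made effective in the setting of the continuous bounded cohomology of $G$ and verified for the specific invariant Chern–Weil forms produced by the Cartan–Borel generation, so the real work lies in matching the compact-dual generators of Cartan and Borel with the noncompact-dual bounded representatives of Gromov through the proportionality principle, and in checking that this matching is compatible with both the ring structure and the comparison map. The Hermitian hypothesis enters precisely at the end: it is what forces this supply of even, bounded characteristic-class generators to already generate all of $H^\bullet_c(G;\R)$, upgrading the general ``even generators'' statement to full surjectivity.
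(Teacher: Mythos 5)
Your overall architecture coincides with the paper's: van Est transports $H^\bullet_c(G;\R)$ to $H^\bullet(\X_u;\R)$, the Hermitian hypothesis gives ${\rm rk}(G_u)={\rm rk}(K)$ so that by Cartan--Borel the target is generated by characteristic classes of $G_u\to\X_u$, a proportionality principle relates the two duals, and Gromov supplies boundedness. The genuine gap is in the mechanism you propose for the boundedness step. You assert that geodesic straightening sends a characteristic form $\om$ to the continuous cocycle $c_\om(g_0,\dots,g_n)=\int_{\Delta(g_0o,\dots,g_no)}\om$ and that ``Gromov's theorem guarantees'' this cocycle is bounded. That cocycle is exactly the Dupont/van Est representative, and its boundedness in higher rank is precisely the \emph{open} strong form of Dupont's conjecture: it would require a uniform bound on $\int_\Delta\om$ over all geodesic simplices, which is not available. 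What Gromov actually proves is that \emph{primary characteristic classes} --- classes in $H^\bullet(G^\delta;\R)$ pulled back along $BG^\delta\to BG$ --- lie in the image of the comparison map for the \emph{discrete} group $G^\delta$. To land in $H^\bullet_{cb}(G;\R)$ one needs an additional mechanism: restrict to a torsion-free cocompact lattice $\Gamma<G$, apply Gromov there, and push back up with the bounded transfer map $H^\bullet_b(\Gamma;\R)\to H^\bullet_{cb}(G;\R)$, using that $\iota_\Gamma^*$ is injective on continuous cohomology. Without the lattice and the transfer, your boundedness conclusion does not reach continuous bounded cohomology of $G$.

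A second, related gap sits in the ``matching'' you yourself flag as the main obstacle. Your proportionality step compares curvatures of the canonical connections on $G_u\to\X_u$ and $G\to\X$, but $\X$ is contractible, so Chern--Weil forms on $\X$ carry no cohomological content until one descends to the compact quotient $M=\Gamma\backslash\X$; the sign relation only becomes a statement about classes when phrased for the bundles $G_u\to\X_u$ and $\Gamma\backslash G\to M$. More importantly, one must prove that the concrete Chern--Weil generators on the compact dual correspond, under van Est, to the \emph{same} classes that Gromov's theorem bounds, namely the pullbacks from $H^\bullet(BG;\R)$. This identification of the naturally-characterized map $H^\bullet(BG;\R)\to H^\bullet_c(G;\R)$ with the geometric map through $BK$ and $\X_u$ (up to the sign $(-1)^k$) is the main technical result of the paper, and its proof runs through the lattice, the locally symmetric space $M$, and a comparison of the classifying maps of the two $K$-bundles. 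Your outline names this interface but does not supply the argument; supplying it is most of the work.
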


Our methods still apply if the condition that $G$ be Hermitian is dropped, albeit with a weaker conclusion. In this case one has to distinguish between even and odd generators of $H^\bullet_{c}(G; \R)$ (see Section~\ref{SecOverview} for the definitions).

\begin{theorem} \label{MainThm2}
	Let $G$ be a semisimple Lie group with finite center and without compact factors. Then the subring of $H^\bullet_c(G; \R)$ generated by the even generators consists of bounded classes.
\end{theorem}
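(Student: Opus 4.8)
The plan is to reduce the statement to a boundedness assertion about characteristic classes on a single compact locally symmetric space and then to invoke Gromov's theorem. Since the comparison map $c_G\colon H^\bullet_{cb}(G;\R)\to H^\bullet_c(G;\R)$ is a homomorphism of rings, its image is a subring; hence it suffices to prove that each even generator lies in the image of $c_G$, i.e.\ is a bounded class. I would first recall, via the van Est isomorphism together with the theorems of Cartan and Borel, that $H^\bullet_c(G;\R)\cong H^\bullet(\check X;\R)$, where $\check X=G_u/K$ is the compact dual symmetric space, and that the even generators lie in the image of the characteristic homomorphism $\chi\colon H^\bullet(BK;\R)\to H^\bullet(\check X;\R)$ attached to the principal $K$-bundle $G_u\to\check X$, while the odd generators are transgressive and do not. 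This is precisely where the Cartan--Borel description of the cohomology of compact homogeneous spaces enters.

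Next I would realise these classes geometrically. Choose a torsion-free cocompact lattice $\Gamma<G$ (which exists by Borel--Harish-Chandra and Selberg) and set $M=\Gamma\backslash X$, a closed aspherical manifold with $\pi_1(M)=\Gamma$ whose frame bundle is the principal $K$-bundle $\Gamma\backslash G\to M$. The decisive point, special to symmetric spaces, is that although this $K$-bundle is not flat, the $G$-bundle $P_G$ obtained by extending the structure group from $K$ to $G$ \emph{is} flat: the left-invariant Maurer--Cartan form of $G$ descends to a flat $\gfr$-valued connection on $P_G$, with holonomy the inclusion $\Gamma\hookrightarrow G$. Because $K\hookrightarrow G$ is a homotopy equivalence, $H^\bullet(BK;\R)\cong H^\bullet(BG;\R)$, so the characteristic classes of $P_G$ are exactly those of $TM$. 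The Hirzebruch-type proportionality principle makes this precise at the level of Chern--Weil forms: it identifies the restriction $r(c)\in H^\bullet(M;\R)$ of an even generator $c$ (here $r\colon H^\bullet_c(G;\R)\to H^\bullet(\Gamma;\R)=H^\bullet(M;\R)$ is restriction to $\Gamma$) with the corresponding characteristic class of $TM$ up to a nonzero constant, and shows that $r$ does not annihilate the characteristic subring, since the characteristic numbers of $M$ are nonzero multiples of those of $\check X$.

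With this in place the boundedness follows from Gromov's theorem that the primary characteristic classes of a flat bundle are bounded: applied to $P_G$ it gives that $r(c)$ lies in the image of the comparison map $c_\Gamma\colon H^\bullet_b(\Gamma;\R)\to H^\bullet(\Gamma;\R)$ for every even generator $c$. To descend this from $\Gamma$ to $G$ I would use the commutative square formed by $c_G$, $c_\Gamma$, the restriction $r$, and the restriction $r_b\colon H^\bullet_{cb}(G;\R)\to H^\bullet_b(\Gamma;\R)$, which is an \emph{isomorphism} because $\Gamma$ is cocompact. Using that $r_b$ is surjective and $r$ injective on the relevant classes, a short diagram chase identifies $\operatorname{im}(c_G)$ with $r^{-1}(\operatorname{im} c_\Gamma)$, so that $r(c)$ bounded forces $c$ bounded; as the image of $c_G$ is a subring, the entire subring generated by the even generators then consists of bounded classes.

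The main obstacle is twofold. First, one must recognise and exploit the flatness of the extended $G$-bundle, without which Gromov's theorem does not apply; the proportionality principle is needed precisely to match the restricted generator $r(c)$ with the genuine Chern--Weil characteristic class of this flat bundle. Second, the method is intrinsically confined to the even generators: Gromov's boundedness concerns characteristic classes, which live in even degrees, whereas the odd generators are transgressive and are not characteristic classes of any bundle, so nothing is concluded about them in this generality. (In the Hermitian case the compact dual is a Hermitian symmetric space with no cohomology in odd degrees, so all generators are even and one recovers the full surjectivity of Theorem~\ref{MainThm}.) A subsidiary technical point is the injectivity of $r$ needed to pull boundedness back from $\Gamma$ to $G$; I would obtain it on the characteristic subring from the nonvanishing of characteristic numbers supplied by the proportionality principle, or alternatively from Matsushima's theorem.
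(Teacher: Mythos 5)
Your overall strategy is the one the paper follows --- Cartan--Borel to place the even generators in the image of the characteristic homomorphism of $G_u\to \X_u$, the flat $G$-bundle $\Gamma\backslash(\X\times G)\cong(\Gamma\backslash G)\times_K G$ over $M$, a Chern--Weil proportionality principle matching the restricted generators with the primary characteristic classes of that flat bundle up to sign, and Gromov's boundedness theorem --- but your final descent from $\Gamma$ back to $G$ rests on a false statement. The restriction $r_b\colon H^\bullet_{cb}(G;\R)\to H^\bullet_b(\Gamma;\R)$ is \emph{not} an isomorphism for cocompact lattices: it is split injective (the bounded transfer is a left inverse), but it is very far from surjective. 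Already for a cocompact surface group $\Gamma<\mathrm{PSL}(2,\R)$ the space $H^2_b(\Gamma;\R)$ is infinite-dimensional (quasimorphisms), while $H^2_{cb}(\mathrm{PSL}(2,\R);\R)\cong\R$; surjectivity of restriction in general is a well-known open problem, not a consequence of cocompactness. Consequently your identification $\operatorname{im}(c_G)=r^{-1}(\operatorname{im}c_\Gamma)$ is unjustified in exactly the direction you need.

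The gap is localized and repairable with the tool the paper actually uses: the \emph{bounded transfer} $T_b^\bullet\colon H^\bullet_b(\Gamma;\R)\to H^\bullet_{cb}(G;\R)$, given on cochains by integration over $G/\Gamma$, which satisfies $T_b^\bullet\circ r_b=\mathrm{id}$ and intertwines the comparison maps with the ordinary transfer $T^\bullet$. If $c$ is an even generator and $r(c)=c_\Gamma(\beta)$ by Gromov's theorem, then
\[
c \;=\; T^\bullet\bigl(r(c)\bigr)\;=\;T^\bullet\bigl(c_\Gamma(\beta)\bigr)\;=\;c_G\bigl(T_b^\bullet(\beta)\bigr),
\]
which uses only that $T^\bullet$ is a left inverse of $r$ (equivalently, the injectivity of $r$ as in Lemma~\ref{LemmaTransfer}) and requires no surjectivity of $r_b$. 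One further caution: your fallback justification of the injectivity of $r$ via nonvanishing of characteristic numbers only controls top-degree products and would not by itself exclude a mid-degree class in the kernel when $H^\bullet(\X_u;\R)$ has odd generators; the transfer (equivalently, harmonic-forms) argument is the one that works in all degrees.
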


We will see in Section~\ref{SecOverview} that Theorem~\ref{MainThm2} implies Theorem~\ref{MainThm}.

Before we turn to the proofs of the theorems, let us explain their context. Continuous bounded cohomology of locally compact groups was introduced by Burger and Monod \cite{BuMo}  as a tool to compute the bounded cohomology groups in the sense of Gromov \cite{Gromov} of compact locally symmetric spaces of the non-compact type, and has found applications beyond that purpose in recent years \cite{MonodSurvey,Surface}. Burger and Monod obtained a complete understanding of continuous bounded cohomology of connected Lie groups in degree $2$. More precisely, they showed the following.
\begin{itemize} \itemsep 0.5ex
	\item[(i)] If $G$ is a connected Lie group with radical $R(G)$, then \[H^2_{cb}(G; \R)
\cong H^2_{cb}(G/R(G); \R).\] This reduces the computation of $H^2_{cb}(G; \R)$ to the case of semisimple Lie groups with finite center. Similarly, one can eliminate compact factors.
	\item[(ii)] If $G$ is a connected semisimple Lie group without compact factors and with finite center, then the degree $2$ comparison map $H^2_{cb}(G; \R) \to H^2_{c}(G; \R)$ is an isomorphism. Since $H^2_{c}(G; \R)$ is well-known, this allows one to compute $H^2_{cb}(G; \R)$ for arbitrary connected Lie groups.
\end{itemize}
One would like to prove a similar result for higher degree bounded cohomology groups. While the reduction step (i) is based on amenability methods and hence works in arbitrary degree, the key ingredient in the proof of step (ii) is double ergodicity, which does not have any analog in higher degrees. A higher degree generalization of step (ii) would therefore require methods different from those used by Burger and Monod. Nevertheless, it is commonly believed that such a higher degree generalization exists \cite[Conjecture 16.1]{Guido}, \cite{MonodSurvey}.

\begin{conjecture} \label{MainConj}
	If $G$ is a connected semisimple Lie group without compact factors and with finite center, then the comparison map $H^\bullet_{cb}(G; \R) \to H^\bullet_{c}(G; \R)$ is an isomorphism.
\end{conjecture}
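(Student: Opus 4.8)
The plan is to deduce the isomorphism from two separate inputs — surjectivity and injectivity of the comparison map $c^\bullet \colon H^\bullet_{cb}(G;\R) \to H^\bullet_c(G;\R)$ — while exploiting the known structure of the target. By the van Est isomorphism $H^\bullet_c(G;\R) \cong H^\bullet(X_u;\R)$, where $X_u$ is the compact dual of the symmetric space $X = G/K$, the target is, by the Cartan--Borel theory, a free graded-commutative algebra on finitely many generators, some in even and some in odd degree; moreover it is computed by the $K$-invariant forms $(\Lambda^\bullet\pfr^*)^K$. Since $c^\bullet$ is a ring homomorphism, surjectivity reduces to realising each generator by a bounded class, and injectivity reduces to controlling the kernel degree by degree.

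For surjectivity I would first invoke Theorem~\ref{MainThm2}: the subring generated by the even generators already consists of bounded classes, via the Hirzebruch proportionality principle together with Gromov's boundedness of primary characteristic classes. The remaining task is the odd generators, which vanish precisely when $G$ is Hermitian (the compact dual is then a generalised flag manifold with cohomology concentrated in even degrees, so that Theorem~\ref{MainThm2} immediately yields Theorem~\ref{MainThm}). In the non-Hermitian case I would attempt to realise each odd generator as a secondary, Chern--Simons type class transgressing a bounded primary class, and argue that the secondary construction preserves boundedness. This is the point at which the general case diverges genuinely from Theorem~\ref{MainThm}, since the boundedness of these odd (Borel regulator type) classes is a separate and delicate matter, known only in special cases.

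The injectivity of $c^\bullet$ is the main obstacle and demands an essentially new idea, because the degree-two argument of Burger--Monod rests on double ergodicity of the $G$-action on $(G/P)^2$, which has no higher-degree analogue. The strategy I would pursue is to bound $H^\bullet_{cb}(G;\R)$ from above: if one can show $\dim H^n_{cb}(G;\R) \le \dim H^n_c(G;\R)$ for every $n$, then surjectivity forces $c^\bullet$ to be an isomorphism. Concretely, I would compute bounded cohomology on the Burger--Monod resolution by alternating $L^\infty$ functions on the Furstenberg boundary $(G/P)^{\bullet+1}$, and try to match the cocycles, up to coboundary, with the finite-dimensional space $(\Lambda^\bullet\pfr^*)^K$ computing $H^\bullet_c$, via a Poisson-transform argument sending a bounded measurable cocycle to an invariant differential form without enlarging cohomology.

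The hardest step is exactly this upper bound. Unlike ordinary continuous cohomology, there is no a priori finiteness for $H^\bullet_{cb}$, and the obstruction is to prove that every bounded cocycle is cohomologous to one lying in a finite-dimensional space identified with $H^\bullet_c$. I expect this to require a higher-order ergodicity or mixing input replacing double ergodicity — an understanding of the ergodic behaviour of $G$ on $(G/P)^n$ for all $n$ — together with a regularity result ensuring that the Poisson transform of an $L^\infty$ boundary cocycle is represented by a genuine invariant smooth form. Absent such an input the injectivity half lies beyond the methods developed here, which is why the full statement is recorded only as a conjecture while the present article establishes the surjectivity that it predicts.
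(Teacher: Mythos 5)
Your proposal is not a proof, and it cannot be compared against a proof of the paper's, because the statement in question is recorded in the paper explicitly as an open conjecture: the paper itself establishes only the surjectivity half in the Hermitian case (Theorem~\ref{MainThm}) and boundedness of the even subring in general (Theorem~\ref{MainThm2}). You correctly reconstruct exactly that known part --- even generators are bounded by combining Proposition~\ref{MainConvenient} (the Hirzebruch-type proportionality identifying the geometric and universal maps) with Proposition~\ref{GromovConvenient} (Gromov's boundedness of primary characteristic classes), and the Hermitian case closes via the equal-rank argument of Proposition~\ref{CorBorel}. Beyond that, however, your two remaining steps are programs with no supporting argument. (a) For the odd generators you propose realising them as ``secondary, Chern--Simons type classes transgressing a bounded primary class'' and asserting that ``the secondary construction preserves boundedness''; no mechanism for this is given, and none is known --- Gromov's theorem applies to primary classes, which here produce exactly the even generators already covered, while the odd generators correspond to Borel-regulator-type classes whose boundedness is a genuinely open problem (known only in special low-degree cases, e.g.\ the volume class for $\mathrm{SL}(2,\C)$). (b) For injectivity, the strategy of bounding $\dim H^n_{cb}$ by $\dim H^n_c$ requires showing that every cocycle in the resolution $L^\infty_{\mathrm{alt}}\bigl((G/P)^{n+1}\bigr)$ is cohomologous to one in a finite-dimensional space matched with $\bigl(\bigwedge^\bullet \pfr^*\bigr)^K$; the ``Poisson transform without enlarging cohomology'' is precisely the missing higher-degree ergodicity input replacing Burger--Monod double ergodicity, and neither your outline nor the paper supplies it. You concede both gaps, which is the honest and correct conclusion, but it means the proposal establishes nothing beyond the paper's Theorems~\ref{MainThm} and~\ref{MainThm2}.

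Two factual slips are also worth correcting. First, you write that the odd generators ``vanish precisely when $G$ is Hermitian''; in fact their number is $\mathrm{rk}(G_u)-\mathrm{rk}(K)$, so they vanish precisely when $G$ has equal rank (a compact Cartan subalgebra), a condition implied by, but strictly weaker than, Hermitian --- for instance $SO(4,1)$ has $\mathrm{rk}(G_u)=\mathrm{rk}(K)=2$ but is not Hermitian. The paper claims only the implication (Proposition~\ref{CorBorel}); its argument in fact yields surjectivity of the comparison map for \emph{all} equal-rank groups, so your claimed dichotomy (Hermitian versus needing odd generators) draws the line in the wrong place. Second, $H^\bullet(\X_u;\R)$ is finite-dimensional and hence not a free graded-commutative algebra: Cartan's theorem gives the tensor decomposition $f_{G_u}^* H^\bullet(BK;\R)\otimes p_{G_u}^* H^\bullet(\X_u;\R)$ in which the even factor is a proper quotient of a polynomial algebra. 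Neither slip damages the reduction of surjectivity to hitting generators, but both should be repaired before this outline is presented as an account of what is known.
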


While the injectivity part of this conjecture is rather new (apparently first suggested in \cite{MonodSurvey}), the surjectivity part goes back to a $30$ years old question of Dupont \cite[Remark\,3]{Dupont}. In fact, Dupont even suggests an explicit candidate for a bounded cocycle in a given cohomology class, namely the unique cocycle in the image of the van Est map. This stronger form of the conjecture was verified by Dupont himself in degree $2$ \cite{Dupont}, but is still open in higher degree. Theorem~\ref{MainThm} may be regarded as a positive answer to the cohomological version of Dupont's conjecture for Hermitian Lie groups.

\medskip

In Section~\ref{SecOverview} we give an outline of the proofs of Theorem~\ref{MainThm} and Theorem~\ref{MainThm2}. The organization of this article will be described at the end of that section.

\bigskip

\bf{Acknowledgments.} We are grateful to Marc Burger for pointing out reference \cite{Borel} and, in particular, Proposition~\ref{CorBorel} to us. While working on the proof of Theorem~\ref{MainThm} we learned from Michelle Bucher-Karlsson that a similar theorem should be true for certain even-degree classes in arbitrary semisimple Lie groups. We are indepted to her for this suggestion, which led us to discover Theorem~\ref{MainThm2}. The second author would like to thank the Department of Mathematics at Rutgers University for their hospitality and excellent working conditions.

\section{Outline of the proof} \label{SecOverview}

In this section we outline the proofs of Theorem~\ref{MainThm} and Theorem~\ref{MainThm2}.

Let $G$ be an arbitrary semisimple Lie group with finite center and without compact factors. The main technical result of this article is Proposition~\ref{MainConvenient} below, which exhibits an identification of two a priorily different maps between the singular cohomology $H^\bullet(BG;\R)$ of the classifying space $BG$ of $G$ and the continuous group cohomology $H^\bullet_c(G;\R)$ of $G$ (see Section \ref{SubsecExistenceUniversal} for the definition of continuous group cohomology).

The first of these maps is characterized by a universal property and will therefore be referred to as the \emph{universal map}
\[
\lmap{\sigma_G}{H^\bullet(BG;\R)}{H^\bullet_c(G;\R)}.
\]
In order to define $\sigma_G$, we first recall that for any discrete group $\Gamma$ there is a natural isomorphism $\map{\sigma_\Gamma}{H^\bullet(B\Gamma;\R)}{H^\bullet(\Gamma;\R)}$ (see Section~\ref{SubsecExistenceUniversal}). Then $\sigma_G$ is defined to be the unique extension of the family $\{\sigma_\Gamma\}$ to a natural transformation, that is, the unique map such that for every discrete group $\Gamma$ and every representation $\map{\rho}{\Gamma}{G}$, the diagram
\[
\begin{xy}\xymatrix{
	H^\bullet(BG;\R) \ar[d]_{(B\rho)^*} \ar[r]^{\sigma_G} & H^\bullet_c(G;\R) \ar[d]^{\rho^*} \\
	H^\bullet(B\Gamma;\R) \ar[r]^{\sigma_\Gamma} & H^\bullet(\Gamma;\R) \\
}\end{xy}
\]
commutes. We will prove in Section~\ref{SubsecExistenceUniversal} that the universal map $\sig_{G}$ actually exists. It is related to the question of boundedness of classes in the continuous cohomology of $G$ by the next proposition, which is an immediate consequence of Gromov's result on boundedness of primary characteristic classes \cite{Gromov} and will be proved in Section~\ref{SubsecGromov}.

\begin{proposition} \label{GromovConvenient}
	The image of the universal map $\sigma_G: H^\bullet(BG; \R) \to H^\bullet_c(G; \R)$ is contained in the image of the comparison map $H^\bullet_{cb}(G;\R) \to H^\bullet_c(G;\R)$.
\end{proposition}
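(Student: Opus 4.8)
The plan is to realize the classes in the image of $\sigma_G$ by continuous group cocycles that are at the same time bounded, so that each such class automatically lies in the image of the comparison map $H^\bullet_{cb}(G;\R)\to H^\bullet_c(G;\R)$. The one nontrivial input is Gromov's boundedness theorem \cite{Gromov}: for every discrete group $\Gamma$, every homomorphism $\map{\rho}{\Gamma}{G}$ and every $\alpha\in H^\bullet(BG;\R)$, the pulled-back characteristic class $\sigma_\Gamma\bigl((B\rho)^*\alpha\bigr)\in H^\bullet(\Gamma;\R)$ lies in the image of the comparison map $H^\bullet_b(\Gamma;\R)\to H^\bullet(\Gamma;\R)$. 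The first step is to feed the universal property of $\sigma_G$ with the tautological datum $\Gamma=G^\delta$, the group $G$ equipped with the discrete topology, and $\rho=\iota$, the identity homomorphism $\map{\iota}{G^\delta}{G}$. The defining commutative square then specializes to $r\circ\sigma_G=\sigma_{G^\delta}\circ(B\iota)^*$, where $\map{r}{H^\bullet_c(G;\R)}{H^\bullet(G^\delta;\R)}$ is the map that forgets the continuity of cochains. By Gromov's theorem the right-hand side, and hence $r(\sigma_G(\alpha))$, is a bounded class in $H^\bullet(G^\delta;\R)$ for every $\alpha$.

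The second step is to promote this boundedness statement across $r$ to the continuous level. Here I would invoke the commuting square
\[
\begin{xy}\xymatrix{
	H^\bullet_{cb}(G;\R) \ar[r] \ar[d]_{r_b} & H^\bullet_c(G;\R) \ar[d]^{r} \\
	H^\bullet_b(G^\delta;\R) \ar[r] & H^\bullet(G^\delta;\R) \\
}\end{xy}
\]
whose horizontal arrows are the comparison maps and whose vertical arrows forget continuity, together with the observation that Gromov's bounded representatives are built from the geometry of the symmetric space $X=G/K$ and are therefore themselves continuous. Concretely, the van~Est--Dupont description of $\sigma_G$ represents $\sigma_G(\alpha)$ by the continuous cocycle obtained from integrating the $G$-invariant Chern--Weil form attached to $\alpha$ over geodesic simplices with vertices in a $G$-orbit in $X$, and Gromov's theorem is precisely the assertion that these integrals are uniformly bounded. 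Thus one and the same cochain is simultaneously continuous and bounded, so it represents a class in $H^\bullet_{cb}(G;\R)$ that maps to $\sigma_G(\alpha)$ under the comparison map.

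The main obstacle is the compatibility underlying this middle step: one must verify that the abstractly characterized universal map $\sigma_G$ genuinely agrees with the explicit van~Est--Dupont cocycle to which Gromov's estimate applies, and that the boundedness seen after restriction to $G^\delta$ is not lost when one passes back to continuous cochains. Both points reduce to checking that the comparison square above commutes at the cochain level and that $r$ respects the chosen representatives. Granting the precise form of Gromov's theorem recalled above, the desired conclusion $\operatorname{im}(\sigma_G)\subseteq\operatorname{im}\bigl(H^\bullet_{cb}(G;\R)\to H^\bullet_c(G;\R)\bigr)$ then follows at once.
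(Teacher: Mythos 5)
Your first step coincides with the paper's: specialize the universal property of $\sigma_G$ to the tautological homomorphism $G^\delta\to G$ and invoke Gromov's theorem to conclude that the image of $\sigma_G(\alpha)$ in $H^\bullet(G^\delta;\R)$ lies in the image of the comparison map $c^\bullet_{G^\delta}\colon H^\bullet_b(G^\delta;\R)\to H^\bullet(G^\delta;\R)$. The gap is in your second step. Knowing that $r(\sigma_G(\alpha))=c^\bullet_{G^\delta}(\beta)$ for \emph{some} bounded class $\beta$ does not let you conclude anything about $\sigma_G(\alpha)$ itself from the commuting square alone: to descend along the square you would need either injectivity of $r$ or a lift of $\beta$ through $r_b$, and neither is available. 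You try to close this gap by asserting that Gromov's bounded representative \emph{is} the continuous van~Est--Dupont cocycle (integration of the invariant Chern--Weil form over geodesic simplices), so that one cochain is simultaneously continuous and bounded. But that assertion is not Gromov's theorem --- Gromov only produces \emph{some} bounded cocycle in the class, with no continuity --- and the claim that the explicit van~Est representative is bounded is precisely Dupont's conjecture, which the introduction of this paper states is open in degrees above $2$ and which the entire argument is designed to circumvent. So the "routine compatibility check" you defer to is in fact the hard open problem.

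The missing idea is the auxiliary torsion-free cocompact lattice $\Gamma<G$. The paper factors $\iota_\Gamma=\iota_\delta\circ\iota^\delta_\Gamma$ through $G^\delta$, restricts Gromov's bounded class $\beta$ to $\Gamma$, and then applies the \emph{bounded transfer map} $T_b^\bullet\colon H^\bullet_{cb}(\Gamma;\R)\to H^\bullet_{cb}(G;\R)$, which exists because $\Gamma$ has finite covolume. Since the ordinary transfer $T^\bullet$ is a left inverse of $\iota_\Gamma^*$ and the two transfers intertwine the comparison maps, one gets $\alpha=T^\bullet(\iota_\Gamma^*\alpha)=c_G^\bullet\bigl(T_b^\bullet((\iota^\delta_\Gamma)^*\beta)\bigr)$, exhibiting $\alpha$ in the image of the comparison map without ever needing the bounded representative to be continuous. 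To repair your proof you should replace the attempted "promotion across $r$" by this restriction-and-transfer mechanism; as written, your argument proves the proposition only modulo Dupont's conjecture.
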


We shall compare the universal map $\sig_{G}$ to the \emph{geometric map}
\[
\lmap{T_G}{H^\bullet(BG;\R)}{H^\bullet_c(G;\R)}
\]
that is defined in terms of explicit geometric data in the following way. Recall that $H^\bullet_c(G; \R)$ gets identified via the van Est isomorphism with the singular cohomology of the compact dual symmetric space $\X_u$ of $G$ (see Section \ref{SubsecVanEst}). Here $\X_u = G_u/K$, where $K$ is a maximal compact subgroup of $G$, and $G_u$ is the compact dual group of $G$ (see Section \ref{SubsecAuxiliary}). Then the geometric map is defined by
\begin{eqnarray} \label{T_Gexplicit}
	T_G\!: H^\bullet(BG;\R) \cong  H^\bullet(BK;\R)  \xto{f_{G_u}^*} H^\bullet(\X_u; \R) \cong H^\bullet_c(G;\R)
\end{eqnarray}
in terms of the classifying map $\map{f_{G_u}}{\X_u}{BK}$ of the canonical $K$-bundle $G_{u} \to \X_{u}$. The next proposition provides the desired identification between the universal and the geometric map.

\begin{proposition} \label{MainConvenient}
	Let $G$ be an arbitrary semisimple Lie group with finite center and without compact factors. Then the universal map $\sigma_G: H^\bullet(BG; \R) \to H^\bullet_c(G; \R)$ and the geometric map $T_G: H^\bullet(BG; \R) \to H^\bullet_c(G; \R)$ agree up to sign.
\end{proposition}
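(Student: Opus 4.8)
The plan is to compare the explicit geometry of $T_G$ with the universal characterization of $\sigma_G$. Since $\sigma_G$ is the \emph{unique} natural transformation extending the family $\{\sigma_\Gamma\}$, it suffices to find enough representations $\rho$ for which the defining naturality square commutes with $\pm T_G$ in place of $\sigma_G$. I would use for this a single torsion-free cocompact lattice $\rho\colon\Gamma\hookrightarrow G$: then $B\Gamma$ is the closed locally symmetric manifold $M=\Gamma\backslash\X$, and the restriction $\rho^*\colon H^\bullet_c(G;\R)\to H^\bullet(\Gamma;\R)\cong H^\bullet(M;\R)$ is \emph{injective}, because under van Est a nonzero $G$-invariant form on $\X$ descends to a nonzero parallel---hence harmonic---form on the closed manifold $M$, so its class survives. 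As $\rho^*\circ\sigma_G=\sigma_\Gamma\circ(B\rho)^*$ holds by the definition of $\sigma_G$, it is then enough to establish the single identity $\rho^*\circ T_G=\pm\,\sigma_\Gamma\circ(B\rho)^*$ in $H^\bullet(M;\R)$; injectivity of $\rho^*$ upgrades it to $T_G=\pm\sigma_G$.

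Both sides of this identity I would represent by invariant differential forms on $M$. Write $c\in H^{2k}(BG;\R)\cong H^{2k}(BK;\R)$ for the class under test and $P\in(\mathrm{Sym}^k\kfr^*)^K$ for the corresponding invariant polynomial. The class $\sigma_\Gamma(B\rho)^*c$ is the characteristic class of the flat principal $G$-bundle $\X\times_\Gamma G\to M$; reducing its structure group to $K$ yields the canonical bundle $\Gamma\backslash G\to M$, and Chern--Weil for the canonical connection attached to $\gfr=\kfr\oplus\pfr$ represents it by the descended invariant form $P(\Omega)$, where the curvature satisfies $\Omega(X,Y)=-[X,Y]$ for $X,Y\in\pfr$. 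On the other side, the Chern--Weil description of the classifying map $f_{G_u}$ of $G_u\to\X_u$ shows that van Est identifies $T_G(c)$ with the $G$-invariant form on $\X$ corresponding to $P(\Omega_u)$, where now $\Omega_u(X,Y)=-[X,Y]$ for $X,Y\in i\pfr$; pulling back along the cocompact lattice descends this invariant form to $M$.

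Comparing the two invariant forms is exactly where an infinitesimal Hirzebruch-type proportionality enters. The identification $X\mapsto iX$ of $\pfr$ with $i\pfr$ intertwines $[\cdot,\cdot]|_{\pfr}$ with $-[\cdot,\cdot]|_{i\pfr}$, hence carries $\Omega_u$ to $-\Omega$; since $P$ is homogeneous of degree $k$, it carries $P(\Omega_u)$ to $(-1)^k P(\Omega)$. Thus the two representatives agree up to the sign $(-1)^k$ on $H^{2k}$, which is precisely the sign asserted in the proposition, and which also explains why only even-degree classes are hit, $H^\bullet(BK;\R)$ being a polynomial algebra on even generators. Together with the previous paragraph this gives $\rho^*T_G(c)=\pm\,\sigma_\Gamma(B\rho)^*c$ and closes the reduction.

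The main obstacle I anticipate is the careful matching of van Est with Chern--Weil under the identifications $H^\bullet_c(G;\R)\cong(\Lambda^\bullet\pfr^*)^K\cong H^\bullet(\X_u;\R)$: one must check that $T_G$ really sends $c$ to the invariant form determined by $P(\Omega_u)$, and that $\rho^*$ on continuous cohomology is computed by descent of invariant forms, while tracking the powers of $i$ through these identifications---it is this bookkeeping that produces and pins down the sign $(-1)^k$. The remaining inputs, namely the existence of a torsion-free cocompact lattice, standard Chern--Weil theory, and Hodge theory on the closed quotient $M$, are classical.
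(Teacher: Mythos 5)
Your proposal is correct and follows essentially the same route as the paper: reduce via the universal characterization of $\sigma_G$ and injectivity of $\iota_\Gamma^*$ to an identity over the closed quotient $M$, identify $\sigma_\Gamma\circ(B\iota_\Gamma)^*c$ with the Chern--Weil class of the canonical $K$-bundle $\Gamma\backslash G\to M$ (the paper's lemma on the flat bundle $\Gamma\backslash(\X\times G)\cong(\Gamma\backslash G)\times_K G$), and extract the sign $(-1)^k$ from the curvature comparison $\iota^*\Omega_u=-\Omega$ under $X\mapsto iX$ (the paper's generalized Hirzebruch proportionality, Proposition~\ref{PropositionDualityOfCharacteristicClasses}). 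The only cosmetic difference is that you justify injectivity of $\iota_\Gamma^*$ via harmonicity of invariant forms on $M$ rather than the transfer map, which is equally valid.
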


We will prove Proposition \ref{MainConvenient} in Section \ref{SectionExplicitDescription}.

We now come to the proof of Theorem~\ref{MainThm2}. Let us denote by $\map{f_{G_u}}{\X_u}{BK}$ the classifying map of the canonical $K$-bundle $\map{p_{G_u}}{G_u}{\X_u}$ over the compact dual symmetric space of $G$. By \cite[Sec.\,10]{Cartan2}, there is an isomorphism of algebras
\[
H^\bullet (\X_u; \R)\cong f_{G_u}^* H^\bullet(BK;\R) \otimes  p_{G_u}^* H^\bullet (\X_u; \R)
\]
which intertwines $p_{G_u}^*$ with the projection onto the second factor. We obtain from this identification generators for the cohomology ring $H^\bullet_c(G; \R) \cong H^\bullet(\X_u; \R)$ in the following way. The singular cohomology ring $H^\bullet_{sing}(K;\R)$ is a Hopf algebra and is therefore generated by odd degree primitive elements \cite{Hopf41}. These primitive elements give rise to even degree generators of $H^\bullet(BK;\R)$ via trangression in the universal $K$-bundle. Taking only those generators of $H^\bullet(BK;\R)$ that are mapped non-trivially under $f_{G_u}^*$ we then obtain generators of the first factor of $H^\bullet (\X_u; \R)$. These generators are unique up to real multiples and we shall refer to them as \emph{even generators} of $H^\bullet (\X_u;\R)$. The second factor $p_{G_u}^* H^\bullet (\X_u;\R) \subset H^\bullet_{sing}(G_u;\R)$ is generated by certain primitive elements lying in the image of the trangression map of the universal $G_u$-bundle. We will call these primitive elements the \emph{odd generators} of $H^\bullet (\X_u;\R)$. (We refer the reader to \cite{Cartan2} and \cite{Borel} for details.) Now, by definition, $H^\bullet_c(G;\R)$ is generated by the even and odd generators, and the image of the map $f_{G_u}^*$ contains all even generators. Hence we see from (\ref{T_Gexplicit}) that all even generators of $H^\bullet (\X_u;\R)$ are contained in the image of the geometric map $T_{G}$ and hence, by Proposition \ref{MainConvenient}, also in the image of the universal map $\sig_{G}$. They are thus bounded by Proposition~\ref{GromovConvenient}. This proves Theorem \ref{MainThm2}.

We close this subsection explaining how Theorem~\ref{MainThm} follows from Theorem~\ref{MainThm2}. By \cite[Sec.\,10]{Cartan2}, the number of odd generators of $H^\bullet (\X_u;\R)$ is given by ${\rm rk}(G_u) - {\rm rk}(K)$. Whence $f_{G_u}^*$ is onto if and only if ${\rm rk}(G_u) = {\rm rk}(K)$. The next proposition shows that this condition is always satisfied if $G$ is Hermitian.

\begin{proposition} \label{CorBorel}
	Let $G$ be a Hermitian semsimimple Lie group without compact factors and with finite center, let $K$ be its maximal compact subgroup, and let $G_u$ be its compact dual. Then ${\rm rk}(G_u) = {\rm rk}(K)$. In particular, $H^\bullet_c(G;\R)$ is generated by its even generators.
\end{proposition}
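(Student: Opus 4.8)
The plan is to reduce the rank equality to the elementary fact that the centralizer of a torus in a compact connected Lie group has maximal rank. The Hermitian hypothesis is used exactly once, in order to identify $K$ with such a centralizer inside $G_u$.

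First I would recall the relevant structure theory. Writing $\gfr = \kfr \oplus \pfr$ for the Cartan decomposition associated with $K$, the compact dual group $G_u$ has Lie algebra $\gfr_u = \kfr \oplus i\pfr$, and $K$ sits inside $G_u$ as a maximal compact subgroup with the \emph{same} isotropy subalgebra $\kfr$. Since $G$ is Hermitian, the structure theory of Hermitian symmetric spaces provides a nonzero central torus $S = Z(K)^0$ in $K$ whose Lie algebra $\mathfrak{z}(\kfr)$ acts on $\pfr$ (and hence on $i\pfr$) by the $G$-invariant complex structure: for suitable $Z \in \mathfrak{z}(\kfr)$ the operator $\ad(Z)$ restricts to an invertible endomorphism of $\pfr$, while it annihilates $\kfr$.

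The key step is then to show that $K$ agrees, up to connected components, with the centralizer $Z_{G_u}(S)$. Passing to Lie algebras, the centralizer $\mathfrak{z}_{\gfr_u}(\mathfrak{z}(\kfr))$ contains all of $\kfr$ (because $\mathfrak{z}(\kfr)$ is central in $\kfr$) and meets $i\pfr$ trivially (because the operators $\ad(Z)$ above have no common kernel there). Hence $\mathfrak{z}_{\gfr_u}(\mathfrak{z}(\kfr)) = \kfr$, so that $Z_{G_u}(S)^0 = K^0$. This identification is where the Hermitian assumption is genuinely needed, and I expect it to be the main obstacle; the remaining arguments are formal.

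Finally I would conclude the rank equality. The torus $S$ lies in some maximal torus $T$ of the compact connected group $G_u$; since $T$ is abelian it centralizes $S$, so $T \subseteq Z_{G_u}(S)$. By the previous step $Z_{G_u}(S)$ has Lie algebra $\kfr$, whence $T \subseteq K$. A maximal torus of $G_u$ contained in $K$ is necessarily a maximal torus of $K$, and therefore $\rk(G_u) = \dim T = \rk(K)$. The concluding assertion of the proposition is then immediate: by the discussion preceding the statement the number of odd generators of $H^\bullet(\X_u;\R)$ equals $\rk(G_u) - \rk(K) = 0$, so $f_{G_u}^*$ is surjective and $H^\bullet_c(G;\R)$ is generated by its even generators.
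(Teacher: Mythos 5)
Your proof is correct, but it takes a genuinely different route from the paper's. The paper's argument is a two-line reduction to the structure theory cited from Knapp: a Hermitian $\gfr$ admits a \emph{compact Cartan subalgebra} $\hfr \subset \kfr$, and since $\gfr \otimes \C = \gfr_u \otimes \C$, this $\hfr$ is simultaneously a Cartan subalgebra of $\kfr$ and of $\gfr_u$, giving $\rk(K) = \dim \hfr = \rk(G_u)$ at once. You instead extract from the Hermitian structure only the central element $Z \in \mathfrak{z}(\kfr)$ with $\ad(Z)$ invertible on $\pfr$ (the complex structure), compute $\mathfrak{z}_{\gfr_u}(\mathfrak{z}(\kfr)) = \kfr$, and then invoke the fact that a maximal torus of the compact connected group $G_u$ containing $S = Z(K)^0$ must lie in the centralizer of $S$ and hence in $K$. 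Both arguments are sound; the small details in yours check out ($\ad(Z)$ kills $\kfr$ and is injective on $i\pfr$, so the centralizer is exactly $\kfr$; $K$ is connected since $G$ has finite center, and a maximal torus of $G_u$ inside $K$ is a maximal torus of $K$). What your approach buys is a more self-contained derivation of the equal-rank property directly from the defining feature of Hermitian symmetric spaces, essentially reproving the existence of a compact Cartan subalgebra via the classical ``centralizer of a torus has full rank'' lemma; what the paper's approach buys is brevity, since the compact Cartan subalgebra is available off the shelf and the duality $\gfr \otimes \C = \gfr_u \otimes \C$ does all the work. The concluding step about the vanishing of the odd generators is identical in both.
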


\begin{proof}
	This follows form the structure theory of Hermitian Lie groups \cite[Chapter~VII.9]{Knapp}: Namely, if $G$ is Hermitian, then its Lie algebra $\L g$ admits a compact 
Cartan subalgebra $\hfr \subset \kfr$. By definition this means that $\hfr\otimes \C \subset \gfr \otimes \C$ is a Cartan subalgebra. But since $\gfr \otimes \C = \gfr_u \otimes
\C$ this implies that $\hfr \subset \gfr_u$ is a Cartan subalgebra as well. Hence ${\rm rk}_\R(K) = \dim \hfr = {\rm rk}_\R(G_u)$.
\end{proof}

Theorem \ref{MainThm} is now an immediate consequence of Theorem \ref{MainThm2} and Proposition \ref{CorBorel}. Note that, for general semisimple Lie groups $G$, the subring of $H^\bullet_{c}(G; \R)$ generated by the even generators may be quite small. For example, for $G = SL(n, \R)$ it is generated by the Euler class and does not contain any stable classes.

\medskip

The remainder of this article is organized as follows. In Section~\ref{SecUniversal}, we show existence of the universal map $\sigma_G$ and prove Proposition \ref{GromovConvenient}. Section~\ref{SecHirzebruch} then provides a generalization of Hirzebruch's proportionality principle \cite{HirzAutom}, which will play a central role in the proof of Proposition \ref{MainConvenient} in Section \ref{SectionExplicitDescription}.

Throughout this article we will frequently make use of an auxiliary lattice $\Gamma$ in $G$. The role of this lattice will be discussed in the next section.

\section{The role of the auxiliary lattice} \label{SubsecAuxiliary}

Let $G$ be a semisimple Lie group with finite center and without compact factors. Throughout this article we shall use the following notation. We fix a maximal compact subgroup $K$ of $G$. Writing $\gfr$ and $\kfr$ for the Lie algebras of $G$ and $K$, and denoting by $\pfr$ the Killing orthogonal complement of $\kfr$ in $\gfr$, we have a Cartan decomposition $\gfr = \kfr \oplus \pfr$. This yields an identification of the tangent space $T_{o}\X$ of the symmetric space $\X = G/K$ of $G$ at the basepoint $o = eK$ with $\pfr$. By definition the compact dual group $G_u$ of $G$ is the analytic subgroup of the universal complexification $G_\C$ of $G$ with Lie algebra $\gfr_u := \kfr \oplus i\,\pfr$. Its homogeneous space $\X_u \deq G_u/K$ is called the compact dual symmetric space of $\X$. Its tangent space $T_{o_{u}}\X_{u}$ at the basepoint $o_{u} = eK$ then gets identified with $i\,\pfr$.

The assignment $\X \mapsto \X_u$ gives rise to a duality between Riemannian symmetric spaces (without Euclidean factors) of the non-compact and compact type \cite{Helgason}. However, this duality is not directly reflected in cohomology, since $\X$ is contractible and so $H^\bullet (\X;\R)$ is trivial. To overcome this problem, we fix a cocompact lattice $\Gamma$ in $G$ and consider the locally symmetric space $M := \Gamma\backslash \X$ instead of $\X$. As we shall see in Section \ref{SecHirzebruch}, the cohomology rings of $M$ and $\X_u$ do reflect the duality between $\X$ and $\X_u$. This will be made precise in the generalized Hirzebruch principle of Proposition \ref{PropositionDualityOfCharacteristicClasses}. It will be convenient to choose $\Gamma$ torsion-free so that $M$ becomes a manifold.

Note that the group cohomology of $\Gamma$ is canonically isomorphic to the singular cohomology of the manifold $M$. The next lemma shows that, by our assumptions, this cohomology ring contains the continuous cohomology ring of $G$.

\begin{lemma} \label{LemmaTransfer}
	Let $\Gamma$ be a torsion-free, cocompact lattice in $G$ and let us denote by \mbox{$\iota_\Gamma: \Gamma \hookrightarrow G$} the
inclusion. Then
\[
\lmap{\iota_\Gamma^*}{H^\bullet(G;\R)}{H^\bullet(\Gamma;\R) \cong H^\bullet(M;\R)}
\]
is injective.
\end{lemma}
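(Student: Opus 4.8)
The plan is to identify the restriction map with a geometric descent map via the van Est isomorphism and then invoke Hodge theory on the compact manifold $M$. Recall (Section~\ref{SubsecVanEst}) that van Est identifies $H^\bullet_c(G;\R)$ with the cohomology of the complex $\Om^\bullet(\X)^G$ of $G$-invariant differential forms on the symmetric space $\X = G/K$. Since $\X$ is of the non-compact type, the de Rham differential vanishes identically on invariant forms (for a symmetric pair $[\pfr,\pfr]\subset\kfr$, so the Chevalley--Eilenberg differential is zero on $(\Lam^\bullet\pfr^*)^K$), and moreover every invariant form is coclosed because its Hodge dual is again invariant, hence closed. Thus each continuous cohomology class is represented by a \emph{unique} $G$-invariant, harmonic form. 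First I would check that, under this identification together with the de Rham isomorphism $H^\bullet(\Ga;\R) \cong H^\bullet(M;\R) \cong H^\bullet_{dR}(M)$, the map $\io_\Ga^*$ corresponds to the descent map sending a $G$-invariant (hence $\Ga$-invariant) form $\om$ on $\X$ to the form $\bar\om$ it induces on $M = \Ga\backslash\X$; this is the naturality of van Est applied to $\io_\Ga$.

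Granting this identification, injectivity follows from Hodge theory. Suppose a class in $H^\bullet_c(G;\R)$ is represented by the $G$-invariant form $\om$ and satisfies $\io_\Ga^*[\om] = [\bar\om] = 0$ in $H^\bullet_{dR}(M)$, so that $\bar\om$ is exact. Since $\om$ is harmonic on $\X$ and the covering projection $\X \to M$ is a local isometry, the descended form $\bar\om$ is harmonic on the \emph{compact} Riemannian manifold $M$. By the Hodge theorem a harmonic form on a closed manifold that is exact must vanish, whence $\bar\om = 0$. Because $\X \to M$ is a surjective local diffeomorphism, $\bar\om = 0$ forces $\om = 0$, and therefore the original class is zero. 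This establishes injectivity of $\io_\Ga^*$.

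The only genuinely delicate point is the identification in the first step: one must verify that the abstract restriction map in continuous group cohomology really corresponds to geometric descent of invariant forms. I expect this to follow from the standard naturality of the van Est map under group homomorphisms, applied to $\io_\Ga\colon \Ga \into G$ with $\Ga$ carrying the discrete topology, together with the fact that for the discrete group $\Ga$ the van Est map reduces to the de Rham isomorphism for $M$. As an alternative that avoids Hodge theory altogether, one could instead construct a transfer map $H^\bullet(\Ga;\R) \to H^\bullet_c(G;\R)$ by integration over the compact quotient $\Ga\backslash G$; its composition with $\io_\Ga^*$ is then multiplication by the nonzero covolume $\vol(\Ga\backslash G)$, which again yields injectivity.
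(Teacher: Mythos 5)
Your argument is correct, but your primary route differs from the paper's: the paper proves the lemma in one line by exhibiting a left inverse of $\iota_\Gamma^*$, namely the transfer map $C(G^{n+1})^{\Gamma} \to C(G^{n+1})^{G}$, $f \mapsto \bar f$, obtained by averaging a $\Gamma$-invariant continuous cochain over $G/\Gamma$ against the $G$-invariant probability measure --- exactly the alternative you sketch in your last sentence (with the probability measure the composite is the identity rather than multiplication by the covolume, but this is immaterial). Your main argument via van Est plus Hodge theory is also valid: invariant forms on $\X$ are indeed closed and coclosed, they descend to harmonic forms on the compact quotient $M$, and an exact harmonic form on a closed Riemannian manifold vanishes; the ``delicate point'' you flag --- that $\iota_\Gamma^*$ is intertwined with the descent map $\Omega^\bullet(\X)^G \to \Omega^\bullet(M)$ --- is precisely Proposition~\ref{SmoothImplementation}, which the paper establishes later from the $s$-injectivity of the de Rham resolution and which does not depend on the lemma, so there is no circularity. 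The trade-off: your proof imports the van Est machinery and uses cocompactness and torsion-freeness of $\Gamma$ essentially (compactness and smoothness of $M$ for Hodge theory), whereas the transfer argument is elementary, works on the level of cochains, and is the same device the paper reuses in bounded cohomology (the bounded transfer map $T_b^\bullet$) in the proof of Proposition~\ref{GromovConvenient}, which is presumably why the authors chose it.
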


\begin{proof}  A left inverse of $\iota_\Gamma^*$ is given by the transfer map \cite{BC}, which on the level of cochains is given by
\[
T^n: C(G^{n+1})^{\Gamma} \to C(G^{n+1})^{G}, \quad f \mapsto \bar f,
\]
where $\bar f$ is given in terms of the $G$-invariant probability measure $\mu$ on $G/\Gamma$ by
\[
\bar f(g_0, \dots, g_n) = \int_{G/\Gamma} f(\dot gg_0, \dots, \dot gg_n)d\mu(\dot g).
\]
\end{proof}

The lemma will later allow us to carry out computations in cohomology using concrete harmonic differential forms on $M$ rather than abstract cohomology classes. Moreover, it gives rise to the following characterization of the universal map $\sigma_G$.

\begin{corollary} \label{NaturalCharacterized}
Let $G$ be a semisimple Lie group without compact factors and with finite center and let $\Gamma$ be a torsion-free, cocompact lattice in $G$. Denote by \mbox{$\iota_\Gamma: \Gamma \hookrightarrow G$} the
inclusion. Let \mbox{$\map{\sigma}{H^\bullet(BG;\R)}{H^\bullet_c(G;\R)}$} be a homomorphism. Then $\sigma$ agrees with the universal map $\sigma_G$ if and only if the diagram
\[
\begin{xy}\xymatrix{
	H^\bullet(BG;\R) \ar[d]_{(B\iota_\Gamma)^*} \ar[r]^\sigma & H^\bullet_c(G;\R)  \ar[d]^{\iota_\Gamma^*} \\
	H^\bullet(B\Gamma;\R) \ar[r]^{\sigma_\Gamma} & H^\bullet(\Gamma;\R) \\
}\end{xy}
\]
commutes.
\end{corollary}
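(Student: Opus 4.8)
The plan is to prove both implications of the equivalence by reducing everything to the defining universal property of $\sigma_G$ and then invoking the injectivity of the restriction map $\iota_\Gamma^*$ furnished by Lemma~\ref{LemmaTransfer}.

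First I would treat the necessity direction. If $\sigma = \sigma_G$, then the required commutativity is nothing but a single instance of the defining naturality square of $\sigma_G$: one takes the discrete group to be $\Gamma$ and the representation to be the inclusion $\iota_\Gamma: \Gamma \hookrightarrow G$. Since by construction $\sigma_G$ is the unique natural transformation making every such square commute, in particular the square attached to the pair $(\Gamma, \iota_\Gamma)$ commutes, and this is exactly the diagram in the statement.

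For the sufficiency direction, suppose $\sigma$ is a homomorphism for which the displayed diagram commutes, so that $\iota_\Gamma^* \circ \sigma = \sigma_\Gamma \circ (B\iota_\Gamma)^*$. Applying the necessity direction to $\sigma_G$ itself yields the same identity with $\sigma_G$ in place of $\sigma$, namely $\iota_\Gamma^* \circ \sigma_G = \sigma_\Gamma \circ (B\iota_\Gamma)^*$. Hence the two composites $\iota_\Gamma^* \circ \sigma$ and $\iota_\Gamma^* \circ \sigma_G$ agree on all of $H^\bullet(BG;\R)$.

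The step on which the whole argument hinges --- and the only input that is not purely formal --- is the injectivity of $\iota_\Gamma^*: H^\bullet_c(G;\R) \to H^\bullet(\Gamma;\R)$ provided by Lemma~\ref{LemmaTransfer}. Precisely because $\iota_\Gamma^*$ admits the transfer map as a left inverse (which is where cocompactness of $\Gamma$ and the existence of the $G$-invariant probability measure on $G/\Gamma$ are used), the coincidence $\iota_\Gamma^* \circ \sigma = \iota_\Gamma^* \circ \sigma_G$ already forces $\sigma = \sigma_G$. I therefore expect the proof to be very short: all the substantive work has been front-loaded into the existence of $\sigma_G$ and into the transfer argument of Lemma~\ref{LemmaTransfer}, so the corollary amounts to a formal repackaging of those two facts together with injectivity.
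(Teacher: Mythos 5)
Your proposal is correct and follows exactly the paper's argument: the forward direction is the naturality square of $\sigma_G$ specialized to $(\Gamma,\iota_\Gamma)$, and the converse follows because $\iota_\Gamma^*$ is injective by Lemma~\ref{LemmaTransfer}. Nothing is missing.
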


\begin{proof} The existence of the diagram follows from the universal property of $\sigma_G$. Conversely, the diagram determines $\sigma$ uniquely since the right down arrow is injective by Lemma \ref{LemmaTransfer}.
\end{proof}

We now fix a cocompact, torsion-free lattice $\Gamma$ in $G$ once and for all. All subsequent arguments will be independent of this choice.

\section{The image of the universal map} \label{SecUniversal}

The aim of this section is to give an explicit construction of the universal map \mbox{$\sigma_G: H^\bullet(B_*G; \R) \to H_c^\bullet(G; \R)$}, whose existence we postulated in Section~\ref{SecOverview}, and to deduce Proposition \ref{GromovConvenient} from Gromov's results on boundedness of primary characteristic classes \cite{Gromov}.

\subsection{Existence of the universal map}\label{SubsecExistenceUniversal} For an arbitrary Hausdorff locally compact topological group $G$, the \emph{continuous cohomology} $H^\bullet_c(G; \R)$ of $G$ with real coefficients is defined as the cohomology of the complex $(C^\bullet_c(G;\R), d)$, where
\[
C^n_c(G; \R) := C(G^{n+1}, \R)^G, \quad df(g_0, \dots, g_n) = \sum_{i=0}^n (-1)^i f(g_0, \dots, \widehat{g_i}, \dots, g_n).
\]
Here $C(\cdot, \R)$ stands for real-valued continuous functions and $(-)^G$ denotes the functor of $G$-invariants. Dropping the continuity requirement on the cochains we obtain the usual group cohomology $H^\bullet(G; \R)$. Thus, if $G^\delta$ denotes $G$ equipped with the discrete
topology, then by definition 
\[
H^\bullet(G; \R) = H^\bullet_c(G^\delta; \R).
\]
An important difference between continuous cohomology and ordinary group cohomology is that the latter has a direct geometric interpretation. Namely, if $BG^{\delta}$ denotes any
classifying space for $G^\delta$, then $H^\bullet(G; \R) \cong H^\bullet(BG^{\delta}; \R)$. However, it is in general \emph{not} true that $H^\bullet_c(G; \R) \cong H^\bullet(BG;
\R)$.

There are various ways to understand the difference between the groups $H^\bullet_c(G; \R)$ and $H^\bullet(BG;\R)$. The classical point of view is to consider $H^\bullet_c(G; \R)$ as
relative cohomology groups on a space with two topologies; this was pioneered by Bott \cite{Bott} and developed further by M. A. Mostow \cite{Mostow}. We will take a different
point of view here, which is inspired from groupoid cohomology \cite{Tu}. Namely, given a Hausdorff topological group $G$ we define a topological simplicial object
$G_\bullet$ by $G_n := G^{n}$, $n \geq 0$ with the usual face and degeneracy maps as described in \cite[p.\,76]{DupontLNM} (where $G_\bullet$ is denoted $NG(\bullet)$). In
order to avoid technicalities we will assume that $G$ is either a Lie group or a (not necessarily countable) discrete group. Then
the fat geometric realization $B_*G := \|G_\bullet\|$ of $G_\bullet$ is a model for the classifying space of $G$, and this model is functorial. In fact, it coincides with the
Milnor model \cite{Milnor}. On each of the
spaces $G_n$ we can consider the sheaf $\underline{\R}$ of locally constant real valued functions and the sheaf $C^0$ of continuous real valued functions. Since these sheaves are
compatible with the face and degeneracy maps, we obtain sheaves $\underline{\R}$ and $C^0$ over the simplicial space $G_\bullet$. (For the notion of a sheaf over a simplicial
space see \cite[Sec.\,3]{Tu}.) We can thus form the corresponding sheaf cohomology groups. Now the sheaf $\underline{\R}$ admits a flabby resolution 
\[\underline{\R} \to C^0 \xto{d} C^1 \to \dots \]
by the sheaves $C^q$ of singular real $q$-cochains (i.e. for $U_n \subset G_n$ the group $C^q(U_n)$ consists of singular real $q$-cochains in $U_n$.) Hence $H^\bullet(G_\bullet;
\underline{\R})$ is the cohomology of the total complex associated to the double complex $\{C^q(G_n)\}$. Then \cite[Prop.\,5.15]{DupontLNM} applies and we obtain
\[H^\bullet(G_\bullet; \underline{\R}) \cong H^\bullet(\|G_\bullet\|; \underline{\R}) \cong H^\bullet(B_*G; \R).\]
On the other hand, the sheaves $C^0$ on $G_n$ are flabby, hence acyclic, and thus the double complex computing $H^\bullet(G_\bullet; C^0)$ collapses to the inhomogeneous bar resolution for $
H_c^\bullet(G; \R)$. This implies
\[H^\bullet(G_\bullet; C^0) \cong H_c^\bullet(G; \R).\]
In particular, the inclusion $i: \underline{\R} \to C^0$ of sheaves induces a map
\begin{eqnarray}\label{DefSigma}\sigma_G: H^\bullet(B_*G; \R) \cong H^\bullet(G_\bullet; \underline{\R}) \xrightarrow{i^*} H^\bullet(G_\bullet; C^0) \cong H_c^\bullet(G; \R).\end{eqnarray}
We can think of this map as follows. The complex which computes $H_c^\bullet(G; \R) \cong H^\bullet(G_\bullet; C^0)$ is the $0$-th row of the double complex $\{C^q(G_n)\}$ which computes
$H^\bullet(B_*G; \R)$. The map $\sigma_G$ is then induced by the projection maps
\[\bigoplus_{p+q = n} C^q(G_p) \to C^0(G_n).\]
Let us spell out the naturality property of $\sigma_G$ explicitly. \mbox{$G \mapsto B_*G \mapsto H^\bullet(B_*G; \R)$} and $G \mapsto H_c^\bullet(G; \R)$ define two functors on the category formed by Lie groups and discrete groups with the obvious
morphisms, and $\sigma_G$ is
a natural transformation between these functors. Moreover, if $\Gamma$ is discrete, then every continuous function is locally constant, whence the sheaves $\underline{\R}$ and
$C^0$ on $\Gamma_\bullet$ coincide, so that $\sigma_\Gamma$ is an isomorphism in this case.

We have thus shown that $\sigma_G$ has the universal property used to define the universal map in Section \ref{SecOverview}. By Corollary \ref{NaturalCharacterized} this property characterizes $\sigma_G$ uniquely, so that we may indeed refer to $\sigma_G$ as \emph{the} universal map from $H^\bullet(B_*G; \R)$ to $H_c^\bullet(G; \R)$.

\subsection{Gromov's boundedness theorem} \label{SubsecGromov}

We claimed in Section \ref{SecOverview} that Proposition~\ref{GromovConvenient} is a direct consequence of Gromov's results on boundedness of primary characteristic classes \cite{Gromov}. We shall now make this precise.

We begin by recalling the necessary background on bounded cohomology. For an arbitrary locally compact Hausdorff topological group $G$, the continuous bounded cohomology ring
$H_{cb}^\bullet(G;\R)$ of $G$ is defined as be the cohomology of the complex $(C^\bullet_{cb}(G;\R), d)$ of continuous 
bounded functions, where
\[
C^n_{cb}(G, \R) := C_b(G^{n+1}, \R)^G, \quad df(g_0, \dots, g_n) = \sum_{i=0}^n (-1)^i f(g_0, \dots, \widehat{g_i}, \dots, g_n).
\]
If $\Gamma$ is a discrete group, we drop the subscript $c$ from notation, writing $H^\bullet(\Gamma;\R) $ and $H_{b}^\bullet(\Gamma;\R)$. In this case, the groups $H_b^\bullet(\Gamma; \R)$ are precisely Gromov's bounded cohomology groups from \cite{Gromov}. The generalization to topological groups is due to Burger and Monod \cite{BuMo}. Its basic properties are summarized in \cite{BC}, from which we recall the following facts. Firstly, the inclusion of complexes
\[(C^\bullet_{cb}(G;\R), d) \hookrightarrow (C^\bullet_{c}(G;\R), d) \]
induces a \emph{comparison map}
\[
\lmap{c^\bullet_G}{H_{cb}^\bullet(G;\R)}{H_c^\bullet(G;\R)}
\]
which is natural in $G$. Secondly, if $H \subset G$ is a subgroup of finite covolume then there exists a \emph{bounded transfer map}
\[
\lmap{T_b^\bullet}{H_{cb}^\bullet(H;\R)}{H_{cb}^\bullet(G;\R)}
\]
which provides a left-inverse to the restriction map $H_{cb}^\bullet(G;\R) \to H_{cb}^\bullet(H;\R)$, and on the level of cochains is given by the same formula as the usual transfer map defined in the proof of Lemma \ref{LemmaTransfer} above.

We now return to the case where $G$ is a semisimple Lie group without compact factors and with finite center, and
denote by $G^\delta$ the discrete group underlying $G$. We denote by $B_*$ a functorial model of the classifying space (say,
the Milnor model) and consider the map $B_*\iota_{G^\delta}: B_*G^\delta \to B_*G$ induced by the continuous map $\iota_{G^\delta}: G^\delta \to G$. The elements in the image of the map
\[
H^\bullet(B_*G) \xrightarrow{(B_*\iota_{G^\delta})^*} H^\bullet(B_*G^\delta) \cong  H^\bullet(G^\delta; \R)
\]
are called \emph{primary characteristic classes}. The following result of Gromov ensures that primary characteristic classes are bounded.
\begin{auxthm}[Gromov \cite{Gromov}] \label{Gromov}
	Every primary characteristic class lies in the image of the comparison map $c_{G^\delta}^\bullet: H_b^\bullet(G^\delta;\R) \to H^\bullet(G^\delta;\R)$.
\end{auxthm}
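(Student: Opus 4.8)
The plan is to prove that every primary characteristic class—that is, every class in the image of $(B_*\iota_{G^\delta})^*: H^\bullet(B_*G) \to H^\bullet(G^\delta;\R)$—lifts to a bounded class, by reducing the statement to Gromov's original theorem on boundedness of characteristic classes of flat bundles. First I would recall the geometric meaning of these classes. A class in $H^\bullet(B_*G)$ is a characteristic class for principal $G$-bundles (equivalently, for the associated vector bundles if we pass through a faithful representation of $G$). Pulling back along $B_*\iota_{G^\delta}$ reinterprets such a class as a characteristic class of \emph{flat} $G$-bundles, since $B_*G^\delta$ classifies $G$-bundles with discrete structure group, i.e. flat bundles. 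Thus the content of the theorem is precisely that characteristic classes of flat bundles are bounded, which is the setting of Gromov's work.

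The core input is Gromov's theorem that the primary (Pontryagin, Euler, and more generally Chern--Weil) characteristic numbers of flat bundles over a manifold satisfy a Milnor--Wood type inequality, so that the corresponding cohomology classes are represented by bounded cocycles. I would formulate this via simplicial volume: Gromov shows that for a flat bundle classified by a representation $\rho: \pi_1(X) \to G$, the pullback $\rho^* \kappa$ of any characteristic class $\kappa \in H^\bullet(BG^\delta;\R)$ has a representative whose value on any singular simplex is bounded independently of the simplex, with the bound governed by the fact that the straightened simplices in the relevant classifying configuration have uniformly bounded ``volume.'' Concretely, one uses the straightening/amenable-filling argument: a cocycle representing a characteristic class, when pulled back to the simplicial set of the discrete group, can be replaced by a cohomologous cocycle whose sup-norm over $(G^\delta)^{n+1}$ is finite.

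The key steps, in order, would be: (i) Identify the image of $(B_*\iota_{G^\delta})^*$ with the subring of $H^\bullet(G^\delta;\R)$ consisting of classes pulled back from $H^\bullet(B_*G)$, and note via the identification $H^\bullet(B_*G^\delta;\R) \cong H^\bullet(G^\delta;\R)$ that these are exactly characteristic classes of the universal flat $G$-bundle. (ii) Invoke Gromov's theorem that each such class admits a bounded representative, i.e. a cocycle in $C_b^\bullet(G^\delta;\R)^{G^\delta}$ whose class in $H^\bullet(G^\delta;\R)$ equals the given primary characteristic class. (iii) Conclude that the class lies in the image of the comparison map $c_{G^\delta}^\bullet: H_b^\bullet(G^\delta;\R) \to H^\bullet(G^\delta;\R)$, since that map is induced by the inclusion of the bounded cochain complex. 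I would organize this so that step (ii) is cited directly from Gromov, treating the theorem as essentially a restatement of his boundedness result in our simplicial-space language.

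The main obstacle I expect is purely expository rather than mathematical: Gromov's original statement is phrased in terms of boundedness of characteristic numbers and of the seminorm on cohomology, whereas the theorem as worded here asserts membership in the image of the comparison map $c_{G^\delta}^\bullet$. Bridging these requires care in checking that ``bounded seminorm'' for a primary characteristic class is equivalent to ``admits a bounded cocycle representative,'' which for these specific classes follows because they are pulled back from the compactly-generated classifying space and one can straighten the relevant cocycles. The subtle point is that boundedness of the \emph{seminorm} does not a priori guarantee a lift to $H_b^\bullet$ in the absence of exactness control; however, for primary characteristic classes Gromov's construction produces an explicit bounded cocycle, so the lift is automatic. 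I would therefore emphasize that the theorem is invoked in its strong form—providing an actual bounded representative—and leave the verification of this equivalence to the references \cite{Gromov} and \cite{BC}.
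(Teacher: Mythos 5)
Your proposal is correct and matches the paper's treatment: the paper gives no independent proof of this statement, quoting it directly from Gromov \cite{Gromov} (with \cite{Michelle} cited for an alternative, stronger version), and your argument is exactly that citation made explicit --- identifying primary characteristic classes via $H^\bullet(B_*G^\delta;\R)\cong H^\bullet(G^\delta;\R)$ with characteristic classes of flat $G$-bundles and invoking Gromov's boundedness theorem, then observing that the inclusion of bounded cochains induces $c_{G^\delta}^\bullet$. Your closing worry is in fact harmless: finiteness of the Gromov seminorm of a class means by definition that some representing cocycle has finite sup-norm, so a lift to $H_b^\bullet(G^\delta;\R)$ is automatic and no appeal to a "strong form" is strictly needed.
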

For an alternative approach to Theorem \ref{Gromov} we refer the reader to \cite{Michelle}, where a stronger version of the theorem is proved.

We are now ready to prove Proposition \ref{GromovConvenient}. We shall be using the (torsion-free, cocompact) auxiliary lattice $\Gamma$ in $G$ introdcued in Section \ref{SubsecAuxiliary}.

The inclusion $\iota_\Gamma: \Gamma \hookrightarrow G$ factors as $\iota_\Gamma = \iota_\delta \circ \iota_\Gamma^\delta$, where $\iota_\Gamma^\delta: \Gamma \to G^\delta$ and $\iota_\delta: G^\delta \to G$. Then the universal property of the universal map $\sigma_G$ yields the following commutative diagram:
\begin{eqnarray*}
\begin{xy}
  \xymatrix{
      & H_{c}^{\bullet}(G; \R)  \ar[d]_{\iota_{\delta}^*} &   \ar_{\sigma_G}[l] \ar[d]^{(B_*\iota_{\delta})^*} H^{\bullet}(B_*G; \R) \\
      H^{\bullet}_b(G^\delta; \R) \ar[r]^{c_{G^\delta}^\bullet} \ar[d]_{(\iota_\Gamma^\delta)^*} & H^{\bullet}(G^\delta; \R) \ar[d]_{(\iota_\Gamma^\delta)^*}&    \ar[l]_{\sigma_{G^\delta}}  H^{\bullet}(B_*G^\delta; \R) \\
      H^{\bullet}_b(\Gamma; \R) \ar[r]^{c_G^\bullet} \ar[d]_{T_b^{\bullet}} & H^{\bullet}(\Gamma; \R) \ar[d]_{T^\bullet}& \\
      H_{cb}^{\bullet}(G; \R) \ar[r]^{c_G^\bullet} & H_{c}^{\bullet}(G; \R)&  \\
  }
\end{xy}
\end{eqnarray*}
Thus if $\alpha$ lies in the image of the universal map $\sig_{G}$, then $\iota_{\delta}^*\alpha$ is a primary characteristic class. By Theorem \ref{Gromov} we thus find $\beta \in H^{\bullet}_b(G^\delta; \R)$ with $\iota_{\delta}^*\alpha = c^{\bullet}_{G^\delta}(\beta)$. Then commutativity of the diagram yields
\[
\alpha = T^\bullet(\iota_{\Gamma}^*\alpha) = T^\bullet \circ \bigl( \iota_\Gamma^\delta \bigr)^* \bigl( \iota_{\delta}^*\alpha \bigr) = T^\bullet \circ \bigl( \iota_\Gamma^\delta \bigr)^* \bigl( c^{\bullet}_{G^\delta}(\beta)  \bigr) = c_G^{\bullet} \bigl( T_b^{\bullet}\circ\iota_{\delta}^*(\beta) \bigr),
\]
that is, $\al$ lies in the image of the comparison map $c^{\bullet}_{G}$.

\section{Duality and cohomology of symmetric spaces}\label{SecHirzebruch}

In this section we recall the van Est isomorphisms, which relates the continuous cohomology of a semisimple Lie group $G$ to the singular cohomology of its compact dual symmetric space $\X_u$. The latter is then related to the cohomology of the locally symmetric space $M$ of the fixed auxiliary lattice $\Gamma$ in $G$ by means of a generalized Hirzebruch proportionality principle. We emphasize that while the classical Hirzebruch proportionality principle \cite{HirzAutom} and its generalizations \cite{KamberTondeur} relate characteristic numbers of bundles over $\X_u$ to characteristic numbers of bundles over $M$, our generalization provides a relation for characteristic classes of arbitrary degree.

\subsection{The van Est isomorphism}\label{SubsecVanEst}\label{SubSectionDualityOfSymmetricSpaces}

Throughout this section, $G$ denotes a semisimple Lie group without compact factors and with finite center. Moreover, we denote by $\iota_\Gamma: \Gamma \hookrightarrow G$ the inclusion of the fixed auxiliary lattice $\Gamma$, and by $M := \Gamma\backslash\X$ the associated locally symmetric space. We will again be considering the groups $H_c^n(G; \R)$, but from a point of view different from the one of the last section. Namely, we will take the point of view of  \cite{BoWa}, thinking of $H_c^n(G; \R)$ as derived functors rather than as sheaf cohomology groups of a simplicial manifold. Accordingly, we can use any $s$-injective resolution
\begin{eqnarray} \label{GRes}
	0 \to \R \to A^0 \xto{d_0} A_1 \to \dots
\end{eqnarray}
in order to compute $H_c^\bullet(G; \R)$ as the cohomology of the complex
\begin{eqnarray} \label{InvComplex}
	(A^0)^G \xto{d_0} (A_1)^G \to \dots
\end{eqnarray}
of $G$-invariants (see \cite[p.\,261]{BoWa}). In fact, there exists a continuous chain map, unique up to $G$-chain homotopy, from the complex \eqref{GRes} into the augmented homogeneous bar resolution, which identifies the cohomology of \eqref{InvComplex} with $H_c^\bullet(G; \R)$. The complex \eqref{GRes} is also $\Gamma$-injective, whence there exists an
isomorphism between the cohomology of the complex
\begin{eqnarray}\label{InvComplexGamma}(A^0)^\Ga \xto{d_0} (A_1)^\Ga \to \dots\end{eqnarray}
and $H^\bullet(\Gamma; \R)$. Via these isomorphisms the map $\iota_\Gamma^*: H_c^\bullet(G; \R) \to H^\bullet(\Gamma; \R)$ is intertwined with the inclusion map $(A_n)^G \hookrightarrow (A_n)^\Ga$.

Now, for the symmetric space $\X$ of $G$
\[
0 \to \R \to \Omega^0(\X) \xto{\dop} \Omega^1(\X) \to \dots
\]
is an $s$-injective resolution \cite[Prop. 5.4]{BoWa}. Using the fact that $G$-invariant forms on $\X$ are harmonic, we thus obtain an isomorphism
\[
\lmap{\iota^\bullet_{vE}}{\Omega^\bullet(\X)^G \cong H^\bullet(\Omega^\bullet(\X)^G, \dop)}{H_c^\bullet(G; \R)},
\]
called the \emph{van Est isomorphism} (see \cite{DupontTopology} for an explicit description of this map on the level of cochains).

A more algebraic model of the space $\Omega^\bullet(\X)^G$ can be obtained as follows. Since the action of $G$ on $\X$ is transitive, any $G$-invariant differential form $\omega
\in \Omega^\bullet(\X)^G$ is uniquely determined by its value $\omega_o$ at the base point. We thus have an isomorphism
\[\Omega^\bullet(\X)^G \cong \left(\bigwedge{}^\bullet \pfr^*\right)^K, \quad \omega \mapsto \omega_o.\]
The right hand side actually is the $(\gfr, K)$-cohomology $H^\bullet(\gfr, K; \R)$ with trivial coefficients. This $(\gfr, K)$-cohomology has the following interpretation in terms of the compact dual symmetric space of $G$: Identify the singular cohomology groups of $\X_u$ with the de Rham cohomology groups $H^{\bullet}\bigl( \Omega^{\bullet}(\X_{u}), \dop^{\bullet} \bigr)$. Now any differential form on $\X_{u}$ can be made
$G_u$-invariant by integration, and this integration does not affect the cohomology classes of closed forms. Moreover, $G_u$-invariant forms are automatically harmonic. Thus Hodge
theory provides an isomorphism
\begin{eqnarray} \label{MapIsosHodge}
	H^{\bullet}\bigl( \Omega^{\bullet}(\X_{u}), \dop^{\bullet} \bigr) \cong H^{\bullet}\bigl( \Omega^{\bullet}(\X_{u})^{G_{u}}, \dop^{\bullet} \bigr) \cong \Omega^{\bullet}(\X_u)^{G_u},
\end{eqnarray}
where $\Omega^{\bullet}(\X_{u})^{G_{u}}$ denotes the subcomplex of $G_{u}$-invariant forms. Furthermore, restricting $G_{u}$-invariant forms on $\X_{u}$ to the basepoint $o_{u}$ we obtain an isomorphism
\begin{eqnarray} \label{XuAlgebraic}
	\Omega^{\bullet}(\X_{u})^{G_u} \cong \left( \bigwedge{}^{\bullet}\,T_{o_{u}}^{\ast}\X_{u} \right)^{K} \cong \left( \bigwedge{}^\bullet (i\,\pfr)^{\ast} \right)^{K}, \jump \om \mapsto \om_{o_{u}}
\end{eqnarray}
which identifies $G_{u}$-invariant forms on $\X_{u}$ with $K$-invariant anti-symmetric multilinear forms on $i\,\pfr$. The linear map $\map{\io}{\pfr}{i\,\pfr}$, $X \mapsto iX$ then induces an isomorphism
\begin{eqnarray} \label{MapIsoiota}
	\lmapcong{\io^{\ast}}{\left( \bigwedge{}^{\bullet} (i\,\pfr)^{\ast} \right)^{K}}{\left( \bigwedge{}^{\bullet} \pfr^{\ast} \right)^{K}}
\end{eqnarray}
which acts on n-linear forms by
\[
(\io^{\ast}\,\al)(X_{1},\dots,X_{n}) = \al(iX_{1},\dots,iX_{n}), \jump X_{1},\ldots,X_{n} \in \pfr.
\]
We have thus obtained an isomorphism
\[H^\bullet(\X_u; \R) \cong \left( \bigwedge{}^{\bullet} \pfr^{\ast} \right)^{K}\]
realizing the $(\L g, K)$-cohomology groups in question as singular cohomology groups of $\X_u$. In particular we obtain isomorphisms
\begin{eqnarray} \label{CompactModel}
	\lmap{\Phi_G}{H^\bullet(\X_u;\R)}{\Omega^\bullet(\X)^G}
\end{eqnarray}
and
\begin{eqnarray} \label{CompactModel2}
	\lmap{\Psi_G := \iota_{vE}^\bullet \circ \Phi_G}{H^\bullet(\X_u;\R)}{H_c^\bullet(G;\R)}.
\end{eqnarray}
Here the isomorphism $\Phi_G$ is completely explicit on cochains. We will now relate these isomorphisms to the auxiliary lattice $\Gamma$ and the associated locally symmetric space $M = \Gamma\backslash \X$. Recall at this point that $\Gamma$ is assumed to be cocompact and torsion free. We denote by $\map{\pi}{\X}{M}$ the canonical projection and observe that the image of the inclusion $\Om^{\bullet}(\X)^{G} \hookrightarrow \Om^{\bullet}_{\rm{cl}}(\X)^{\Ga} \cong \Omega^\bullet(M)$ consist of closed forms. In particular, we obtain a push-forward map
\begin{eqnarray} \label{MapFromXToM}
	\lmap{\pi_{!}}{\Om^{\bullet}(\X)^{G}}{H^{\bullet}(M;\R)}.
\end{eqnarray}
Since the inclusion $\map{\io_{\Gamma}}{\Gamma}{G}$ induces the restriction map \mbox{$\map{\iota_\Gamma^*}{H^\bullet_c(G;\R)}{H^\bullet(\Gamma;\R)}$},  we have thus proved the following proposition.

\begin{proposition} \label{SmoothImplementation}
	If $i_M:  H^\bullet_{dR}(M; \R)  \to H^\bullet(\Gamma; \R)$ denotes the canonical isomorphism, then the diagram
\[\begin{xy}\xymatrix{
\Omega(\X)^G \ar[d]_{\iota_{vE}} \ar[r]^{\pi_!}& H^\bullet_{dR}(M; \R) \ar[d]^{i_M}\\
H^\bullet_c(G; \R) \ar[r]^{\iota_\Gamma^*}& H^\bullet(\Gamma; \R)
}\end{xy}\]
commutes.
\end{proposition}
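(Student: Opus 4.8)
The plan is to recognize that all four maps in the square are governed by the single resolution $0 \to \R \to \Omega^\bullet(\X)$, which is simultaneously $s$-injective and $\Gamma$-injective, and that through the associated identifications each of them reduces to the inclusion of invariant subcomplexes $\Omega^\bullet(\X)^G \hookrightarrow \Omega^\bullet(\X)^\Gamma$. Once this common description is in place, commutativity of the diagram becomes a formal diagram chase.

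First I would describe the two vertical isomorphisms in these terms. Since $G$-invariant forms on $\X$ are harmonic and hence closed, the de Rham differential vanishes identically on $\Omega^\bullet(\X)^G$; thus $\Omega^\bullet(\X)^G = H^\bullet(\Omega^\bullet(\X)^G, \dop)$, and the van Est map $\iota_{vE}$ is exactly the identification of this cohomology with $H_c^\bullet(G; \R)$ coming from $s$-injectivity of $\Omega^\bullet(\X)$. On the other side, the identification $\Omega^\bullet(\X)^\Gamma \cong \Omega^\bullet(M)$ yields $H^\bullet_{dR}(M; \R) = H^\bullet(\Omega^\bullet(\X)^\Gamma, \dop)$, and $i_M$ is the identification of this group with $H^\bullet(\Gamma; \R)$ coming from $\Gamma$-injectivity of the very same resolution.

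Next I would read off the two horizontal maps at the level of the resolution. By the remark preceding the proposition, under these resolution-based identifications the restriction map $\iota_\Gamma^*$ is intertwined with the inclusion $\Omega^\bullet(\X)^G \hookrightarrow \Omega^\bullet(\X)^\Gamma$ followed by passage to cohomology. By its very construction, the push-forward $\pi_!$ is the composite of this same inclusion $\Omega^\bullet(\X)^G \hookrightarrow \Omega^\bullet(\X)^\Gamma \cong \Omega^\bullet(M)$ with the formation of de Rham cohomology classes, using precisely that a $G$-invariant form descends to a closed form on $M$. Thus both horizontal maps are manifestations of the inclusion of invariants.

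The verification then amounts to a direct chase: for $\omega \in \Omega^\bullet(\X)^G$, both $i_M(\pi_! \omega)$ and $\iota_\Gamma^*(\iota_{vE} \omega)$ equal the class in $H^\bullet(\Gamma; \R)$ represented by $\omega$, viewed as a closed $\Gamma$-invariant form on $\X$, under the $\Gamma$-injective resolution $\Omega^\bullet(\X)$. Hence the square commutes. The one point that really needs care, and on which the argument rests, is the compatibility of the van Est and $i_M$ identifications: both must be induced by one and the same continuous chain map from $\Omega^\bullet(\X)$ into the homogeneous bar resolution, so that restricting $G$-invariant cochains to $\Gamma$-invariant ones genuinely computes $\iota_\Gamma^*$. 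This compatibility is exactly the content of the intertwining statement quoted above, where the uniqueness of the chain map up to $G$-chain homotopy is used.
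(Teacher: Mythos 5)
Your proposal is correct and follows essentially the same route as the paper: the paper also deduces the commutativity from the fact that $\Omega^\bullet(\X)$ is simultaneously an $s$-injective resolution for $G$ and a $\Gamma$-injective resolution, so that $\iota_\Gamma^*$ is intertwined with the inclusion $\Omega^\bullet(\X)^G \hookrightarrow \Omega^\bullet(\X)^\Gamma \cong \Omega^\bullet(M)$, which is precisely $\pi_!$ up to passing to cohomology. The compatibility point you single out (that both vertical identifications come from one chain map into the homogeneous bar resolution, unique up to equivariant homotopy) is exactly the ingredient the paper invokes just before stating the proposition.
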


The generalized Hirzebruch proportionality principle will be formulated in terms of the homomorphism $\map{\Phi_\Gamma}{H^\bullet(\X_u;\R)}{H^\bullet(M; \R)}$ defined by 
\begin{eqnarray} \label{MapBetweenCohomologyOfSymmetricSpaces}
	\lmap{\Phi_{\Ga} := \pi_! \circ \Phi_G}{H^{\bullet}(\X_{u};\R)}{H^{\bullet}(M;\R)}.
\end{eqnarray}
It will thus be convenient to have the following reformulation of Proposition \ref{SmoothImplementation}.

\begin{corollary} \label{LatticeInclusion}
	The diagram
\[
\begin{xy}\xymatrix{
	H^\bullet(\X_u;\R) \ar[r]^{\Phi_\Gamma} \ar[d]_{\Psi_G} & H^\bullet_{dR}(M;\R) \ar[d]^{i_M}\\
	H_{c}^\bullet(G;\R) \ar[r]^{\iota_\Gamma^*} & H^\bullet(\Gamma;\R).}
\end{xy}
\]
commutes.
\end{corollary}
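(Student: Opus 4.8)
The plan is to observe that Corollary \ref{LatticeInclusion} is nothing more than a reformulation of Proposition \ref{SmoothImplementation}, obtained by precomposing with the explicit isomorphism $\Phi_G$ of \eqref{CompactModel}. Since every map in sight has already been set up, the argument will amount to unwinding two definitions and cancelling a common invertible factor.

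First I would recall how the two maps appearing in the corollary are built. By definition \eqref{MapBetweenCohomologyOfSymmetricSpaces} the top horizontal arrow factors as $\Phi_\Gamma = \pi_! \circ \Phi_G$, and by definition \eqref{CompactModel2} the left vertical arrow factors as $\Psi_G = \iota_{vE}^\bullet \circ \Phi_G$. Consequently the two composites around the square of Corollary \ref{LatticeInclusion} are $i_M \circ \Phi_\Gamma = i_M \circ \pi_! \circ \Phi_G$ and $\iota_\Gamma^* \circ \Psi_G = \iota_\Gamma^* \circ \iota_{vE}^\bullet \circ \Phi_G$, both regarded as maps $H^\bullet(\X_u;\R) \to H^\bullet(\Gamma;\R)$.

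The second and final step is to cancel the common factor $\Phi_G$. Because $\Phi_G$ is an isomorphism, commutativity of the square in the corollary is equivalent to the identity $i_M \circ \pi_! = \iota_\Gamma^* \circ \iota_{vE}^\bullet$ of maps $\Omega^\bullet(\X)^G \to H^\bullet(\Gamma;\R)$. But this identity is exactly the assertion of Proposition \ref{SmoothImplementation}, whose square has $\Omega^\bullet(\X)^G$ in its upper left corner and precisely the arrows $\pi_!$, $i_M$ across the top and right and $\iota_{vE}$, $\iota_\Gamma^*$ down the left and along the bottom. Transporting that square along $\Phi_G$ therefore yields the square of the corollary, and commutativity follows at once.

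I do not expect any genuine obstacle here: the content of the corollary is entirely contained in Proposition \ref{SmoothImplementation}, and the only thing to check is the purely formal observation that both $\Phi_\Gamma$ and $\Psi_G$ arise from the single isomorphism $\Phi_G$ by postcomposition with $\pi_!$ and $\iota_{vE}^\bullet$ respectively. The value of the corollary is thus not its depth but its convenience: it repackages Proposition \ref{SmoothImplementation} in terms of the compact dual cohomology $H^\bullet(\X_u;\R)$, which is the form in which it will feed into the generalized Hirzebruch principle.
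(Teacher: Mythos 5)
Your proposal is correct and is exactly the argument the paper intends: since $\Phi_\Gamma = \pi_! \circ \Phi_G$ and $\Psi_G = \iota_{vE}^\bullet \circ \Phi_G$ by definition, the corollary is Proposition \ref{SmoothImplementation} precomposed with the isomorphism $\Phi_G$. The paper presents the corollary as an immediate reformulation without further proof, so there is nothing to add.
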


\subsection{Characteristic classes of principal bundles}
\label{SubSectionCharacteristicClasses}

In this subsection we collect some basic facts from Chern-Weil theory, that is, the theory of characteristic classes of principal bundles over smooth manifolds. We follow the exposition in \cite{DupontLNM}.

Let $BK$ denote a classifying space for the compact Lie group $K$. The elements of $H^{\bullet}(BK;\R)$ are characteristic classes of principal $K$-bundles. More specifically, given a smooth manifold $X$ and a principal $K$-bundle $P \to X$ with classifying map $\map{f_{P}}{X}{BK}$, any characteristic class $c \in H^{\bullet}(BK;\R)$ gives rise to a corresponding characteristic class
\[
c(P) \deq f_{P}^{\ast}\,c \in H^{\bullet}(X;\R)
\]
of the bundle $P$. For any connection 1-form $A \in \Om^{1}(P,\kfr)$ on $P$, its curvature 2-form $F_{A} \in \Om^{2}(P,\kfr)$ is given by
\[
F_{A} = \dop\!A + \frac{1}{2} [A \wedge A].
\]
Here $[A \wedge A]$ denotes the 2-form defined by
\[
[A \wedge A](v,w) = [A(v),A(w)] - [A(w),A(v)] = 2\,[A(v),A(w)]
\]
for tangent vectors $v,w$ on $P$. The curvature form $F_{A}$ is horizontal and thus descends to a 2-form
\[
F_{A} \in \Om^{2}\bigl( X,P(\kfr) \bigr)
\]
on $X$ with values in the adjoint bundle $P(\kfr) := P \xop_{K} \kfr$. Recall that $K$ acts on the space $S^{k}(\kfr^{\ast})$ of symmetric $k$-multilinear functions on $\kfr$ via the diagonal adjoint action. We denote by $I^{k}(\kfr^{\ast}) \subset S^{k}(\kfr^{\ast})$ the subset of those functions that are invariant under this action. Given a $K$-invariant symmetric function $f \in I^{k}(\kfr^{\ast})$ and a connection 1-form $A \in \Om^{1}(P,\kfr)$, we obtain a well-defined closed 2-form
\[
f(F_{A},\cdots,F_{A}) \in \Om^{2}(X)
\]
on $X$ which then defines a class in $H^{2}_{\rm{dR}}(X)$. We now have the following lemma (see Theorem 8.1 in \cite{DupontLNM}):

\begin{lemma} \label{LemmaChernWeil}
	Let $K$ be a compact Lie group.
\begin{enumerate} \itemsep 1ex
	\item There are no characteristic classes of principal $K$-bundles in odd degree, that is,
\[
H^{2k+1}(BK; \R) = \{0\}
\]
for all $k \geq 0$.
	\item For every characteristic class $c \in H^{2k}(BK;\R)$ there exists a unique $f \in I^{k}(\kfr^{\ast})$ such that the following holds.

For every principal $K$-bundle $P \to X$ over a compact smooth manifold $X$ and every connection 1-form $A \in \Omega^1(P,\kfr)$ with curvature form $F_{A} \in \Omega^{2}(X;P(\kfr))$, the corresponding characteristic class $c(P) \in H^{2}(X;\R)$ of the bundle $P$ gets identified with the class
\[
[f(F_{A},\cdots,F_{A})] \in H^{2}_{\rm{dR}}(X)
\]
under the de Rham isomorphism.
\end{enumerate}
\end{lemma}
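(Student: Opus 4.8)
This is the fundamental theorem of Chern--Weil theory, and the plan is to establish both parts simultaneously by constructing the Chern--Weil homomorphism and identifying it with the isomorphism that computes $H^{\bullet}(BK;\R)$. The starting point is a single bundle. Given a principal $K$-bundle $P \to X$ with connection $A$ and curvature $F_{A}$, and given $f \in I^{k}(\kfr^{\ast})$, the associated form $f(F_{A},\dots,F_{A}) \in \Om^{2k}(X)$ is closed; this uses the Bianchi identity $\dop^{A}F_{A} = 0$ together with the $\Ad$-invariance of $f$, which lets one commute the exterior covariant derivative past the symmetric function. First I would verify that the resulting de Rham class is independent of $A$: for two connections $A_{0}, A_{1}$ the affine path $A_{t} = (1-t)A_{0} + tA_{1}$ is again a family of connections, and integrating the corresponding forms along $t$ produces an explicit transgression form whose differential equals the difference of the two Chern--Weil forms. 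This yields a degree-doubling homomorphism $w_{P}\colon I^{\bullet}(\kfr^{\ast}) \to H^{\bullet}_{\rm{dR}}(X)$, $f \mapsto [f(F_{A},\dots,F_{A})]$, natural with respect to pullback of bundles.

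Next I would specialize to the universal bundle $EK \to BK$. Choosing a universal connection --- either through the finite-dimensional Grassmannian approximations of Narasimhan--Ramanan, or through the simplicial Weil algebra model of \cite{DupontLNM}, which is what makes the construction rigorous since $BK$ is not a finite-dimensional manifold --- produces a universal Chern--Weil homomorphism $w\colon I^{\bullet}(\kfr^{\ast}) \to H^{\bullet}(BK;\R)$. Because a function $f$ of degree $k$ yields a $2k$-form, the image of $w$ is automatically concentrated in even degrees.

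The crux is that $w$ is an \emph{isomorphism} when $K$ is compact, and I expect this to be the main obstacle, since everything preceding it is formal differential geometry while this step is where the topology genuinely enters. By Borel's theorem \cite{Borel}, $H^{\bullet}(BK;\R)$ is, via Chevalley restriction to a maximal torus, isomorphic to the algebra $I^{\bullet}(\kfr^{\ast})$ of invariant polynomials, and one checks that this isomorphism is precisely the one induced by $w$. Granting bijectivity, statement (1) is immediate: $H^{\bullet}(BK;\R) \cong I^{\bullet}(\kfr^{\ast})$ is concentrated in even degrees, so $H^{2k+1}(BK;\R) = \{0\}$. For statement (2), given $c \in H^{2k}(BK;\R)$ I would set $f \deq w^{-1}(c) \in I^{k}(\kfr^{\ast})$, which exists and is unique precisely because $w$ is bijective. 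Naturality of the Chern--Weil construction then closes the argument: for $P \to X$ with classifying map $f_{P}$ and any connection $A$,
\[
c(P) = f_{P}^{\ast} c = f_{P}^{\ast} w(f) = w_{P}(f) = [f(F_{A},\dots,F_{A})],
\]
which is exactly the asserted identification under the de Rham isomorphism.
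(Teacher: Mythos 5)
Your proposal is correct and follows the standard Chern--Weil argument; the paper gives no proof of this lemma at all, simply citing Theorem 8.1 of \cite{DupontLNM}, which is precisely the statement that the universal Chern--Weil homomorphism $I^{\bullet}(\kfr^{\ast}) \to H^{\bullet}(BK;\R)$ is an isomorphism for compact $K$. Your outline --- closedness via the Bianchi identity, independence of the connection via transgression, passage to the universal bundle, and identification with Borel's computation via restriction to a maximal torus --- reconstructs exactly the content of that cited reference, so there is nothing to compare beyond noting that the one genuinely nontrivial step (matching $w$ with Borel's isomorphism) is the same step the reference supplies.
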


\subsection{Duality of characteristic classes}
\label{SubSectionDualityOfCharacteristicClasses}

The symmetric spaces $\X$ and $\X_{u}$ come along with canonical principal $K$-bundles
\[
G \lto \X=G/K, \jump G_{u} \lto \X_{u}=G_{u}/K.
\]
The former bundle further descends to a principal $K$-bundle
\[
\Ga \backslash G \lto M=\Ga \backslash \X
\]
over the locally symmetric space $M$. As we have seen in Section \ref{SubSectionCharacteristicClasses} above, any characteristic class $c \in H^{2k}(BK;\R)$ yields corresponding characteristic classes
\[
c(\Ga \backslash G) \in H^{\bullet}(M;\R) \jump \text{and} \jump c(G_{u}) \in H^{\bullet}(\X_{u};\R)
\]
of the canonical $K$-bundles $\Ga\backslash G \to M$ and $G_{u} \to \X_{u}$, respectively. The following proposition gives an explicit relation between these classes.

\begin{proposition} \label{PropositionDualityOfCharacteristicClasses}
	Let $k \ge 0$, and let $c \in H^{2k}(BK;\R)$ be a characteristic class. Then the corresponding characteristic classes of the canonical $K$-bundles $G_{u} \to \X_{u}$ and $\Ga\backslash G \to M$ are related by
\[
\Phi_{\Ga}\bigl( c(G_{u}) \bigr) = (-1)^{k} \cdot c(\Ga \backslash G),
\]
where $\map{\Phi_{\Ga}}{H^{2k}(\X_{u};\R)}{H^{2k}(M;\R)}$ is the homomorphism (\ref{MapBetweenCohomologyOfSymmetricSpaces}).
\end{proposition}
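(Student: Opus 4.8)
The plan is to compute both characteristic classes by Chern--Weil theory (Lemma~\ref{LemmaChernWeil}), using the \emph{canonical connections} on the two bundles that arise from the Maurer--Cartan forms via the respective Cartan decompositions. Because these connections are invariant and $f$ is $\Ad_K$-invariant, the resulting Chern--Weil forms are automatically $G_u$- (resp.\ $G$-) invariant, so they \emph{are} their own harmonic representatives. This lets me sidestep cohomology and reduce the proposition to a single pointwise identity of invariant multilinear forms at the base points, which I then transport to $M$ through the explicit definitions of $\Phi_G$ and $\pi_!$.

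First I would fix the canonical connection on $G \to \X$. Writing $\theta \in \Om^1(G,\gfr)$ for the Maurer--Cartan form and decomposing $\theta = \theta_\kfr + \theta_\pfr$ along $\gfr = \kfr \oplus \pfr$, the $\kfr$-component $A \deq \theta_\kfr$ is a connection $1$-form (it restricts to the vertical Maurer--Cartan form and is equivariant because the Cartan decomposition is $\Ad_K$-stable). Using the Maurer--Cartan equation together with the bracket relations $[\pfr,\pfr]\subset\kfr$ and $[\kfr,\pfr]\subset\pfr$, its curvature simplifies to $F_A = -\tfrac12\,[\theta_\pfr \wedge \theta_\pfr]$, so that at the base point
\[
(F_A)_o(X,Y) = -[X,Y], \jump X,Y \in \pfr.
\]
The identical construction on $G_u \to \X_u$, now along $\gfr_u = \kfr \oplus i\,\pfr$, produces a canonical connection $A_u$ with $(F_{A_u})_{o_u}(iX,iY) = -[iX,iY]$. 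Since $[iX,iY] = i^2[X,Y] = -[X,Y]$ inside $\gfr_u \subset \gfr\otimes\C$ and $[X,Y]\in\kfr$, this yields
\[
(F_{A_u})_{o_u}(iX,iY) = [X,Y] = -(F_A)_o(X,Y), \jump X,Y \in \pfr.
\]
This sign flip of the $\kfr$-valued curvature under $\io$ is the heart of the matter.

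Next I would feed these into Chern--Weil. For $c \in H^{2k}(BK;\R)$ let $f \in I^k(\kfr^*)$ be the associated invariant polynomial. Then $c(G_u)$ is represented by the $G_u$-invariant $2k$-form $f(F_{A_u},\dots,F_{A_u})$ and, under $H^\bullet(\X_u;\R) \cong \Om^\bullet(\X_u)^{G_u} \cong (\bigwedge^\bullet(i\,\pfr)^*)^K$, by its value $f(F_{A_u},\dots,F_{A_u})_{o_u}$ at the base point. Applying $\Phi_G$ amounts to applying $\io^*$, i.e.\ evaluating on $(iX_1,\dots,iX_{2k})$. Each of the $k$ curvature slots of $f$ then receives the value $(F_{A_u})_{o_u}(iX_a,iX_b) = -(F_A)_o(X_a,X_b)$; by $k$-linearity of $f$ these $k$ sign changes combine to a global factor $(-1)^k$, giving the form-level identity
\[
\io^*\bigl(f(F_{A_u},\dots,F_{A_u})_{o_u}\bigr) = (-1)^k\,f(F_A,\dots,F_A)_o
\]
in $(\bigwedge^{2k}\pfr^*)^K$. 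Hence $\Phi_G(c(G_u))$ is the $G$-invariant form $(-1)^k f(F_A,\dots,F_A)$ on $\X$.

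Finally I would push forward to $M$. The canonical connection on $G \to \X$ is $G$-invariant, hence $\Ga$-invariant, so it descends to $\Ga\backslash G \to M$ with curvature the descent of $F_A$; thus $c(\Ga\backslash G)$ is represented by the descended form $f(F_A,\dots,F_A)$. Since $\pi_!$ merely regards a $G$-invariant form on $\X$ as the corresponding closed $\Ga$-invariant form on $M$ and passes to its class, applying $\pi_!$ to the previous identity gives $\Phi_\Ga(c(G_u)) = \pi_!\Phi_G(c(G_u)) = (-1)^k c(\Ga\backslash G)$, as claimed. The main obstacle is the bookkeeping of the second step: one must verify that the Maurer--Cartan construction genuinely yields connection forms on both bundles, compute the two curvatures correctly, and track the complexification sign $[iX,iY]=-[X,Y]$ together with the $k$-linearity of $f$ that promotes it to the global factor $(-1)^k$.
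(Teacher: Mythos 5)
Your proposal is correct and follows essentially the same route as the paper: the same canonical connections obtained as the $\kfr$-components of the Maurer--Cartan forms, the same pointwise curvature computation yielding $(F_{A_u})_{o_u}(iX,iY)=[X,Y]=-(F_{\tilde A})_o(X,Y)$, the same use of $k$-linearity of the invariant polynomial to promote the sign flip to the global factor $(-1)^k$, and the same descent through $\Phi_G$ and $\pi_!$. The only cosmetic difference is that you derive the clean identity $F_A=-\tfrac12[\theta_\pfr\wedge\theta_\pfr]$ from the bracket relations before evaluating at the base point, whereas the paper evaluates the curvature directly at the identity.
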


\begin{proof}
	We will prove the claimed relation by unraveling the definition of the homomorphism $\Phi_{\Ga}$ given in Section \ref{SubSectionDualityOfSymmetricSpaces} above, using the results from Chern-Weil theory discussed in the previous subsection.

First of all, we introduce appropriate connection 1-forms on the bundles $\Ga\backslash G \to M$ and $G_u \to \X_u$. Let us denote by $\th_{G} \in \Om^{1}(G;\gfr)$ and $\th_{G_{u}} \in \Om^{1}(G_{u};\gfr_{u})$ the left Maurer-Cartan forms on $G$ and $G_{u}$ respectively, and denote by
\[
\lmap{\pi_{\kfr}}{\gfr = \kfr \oplus \pfr}{\kfr}, \jump \lmap{\pi_{\kfr}^{u}}{\gfr_{u} = \kfr \oplus i\,\pfr}{\kfr}
\]
the canonical projections. Then
\[
\tilde{A} \deq \pi_{\kfr} \circ \th_{G} \in \Om^{1}(G;\kfr)
\]
defines a connection 1-form on the bundle $G \to \X$. This form is invariant under the action of the auxiliary lattice $\Ga$ in $G$ and hence descends to a connection 1-form $A \in \Om^{1}(\Ga \backslash G;\kfr)$ on the bundle $\Ga \backslash G \to M$. Likewise,
\[
A_{u} \deq \pi_{\kfr}^{u} \circ \th_{G_{u}} \in \Om^{1}(G_{u};\kfr)
\]
defines a connection 1-form on the bundle $G_{u} \to \X_{u}$. \\

Now we are ready for the actual proof of the proposition: Let $c \in H^{2k}(BK; \R)$ be a characteristic class and denote by $f \in I^{k}(\kfr^{\ast})$ the corresponding $k$-multilinear invariant function on $\kfr$ as in Lemma \ref{LemmaChernWeil}.
By Lemma \ref{LemmaChernWeil}, the characteristic class
\[
c(G_{u}) \in H^{2k}(\X_{u};\R)
\]
is represented as a class in de Rham cohomology by the closed form
\[
f(F_{A_{u}},\dots,F_{A_{u}}) \in \Omega^{2k}(\X_{u}).
\]
Since the Maurer-Cartan form $\th_{G_{u}} \in \Om^{1}(G_{u};\kfr)$ is $G_{u}$-inavariant, it follows that the connection 1-form $A_{u} \in\Om^{1}(G_{u};\kfr)$ and hence also the form $f(F_{A_{u}},\dots,F_{A_{u}}) \in\Om^{2k}(\X_{u})$ on $\X_{u}$ is $G_{u}$-invariant. Thus the class $[f(F_{A_{u}},\dots,F_{A_{u}})]$ is mapped under (\ref{MapIsosHodge}) to the invariant form
\[
f(F_{A_{u}},\dots,F_{A_{u}}) \in\Om^{2k}(\X_{u})^{G_{u}}.
\]
Restricting this form to the basepoint $o_{u}$ we see that it gets mapped further under (\ref{XuAlgebraic}) to the $K$-invariant $2k$-linear form
\[
\bigl( f(F_{A_{u}},\dots,F_{A_{u}}) \bigr)_{o_{u}} \in \left( \bigwedge{}^{2k} (i\,\pfr)^{\ast} \right)^{K}
\]
on $i\,\pfr$. In order to figure out the image of this form under the isomorphism (\ref{MapIsoiota}) we use the following fact.

\begin{claim*}
\[
\io^{\ast} \Bigl( \bigl( f(F_{A_{u}},\dots,F_{A_{u}}) \bigr)_{o_{u}} \Bigr) = (-1)^{k} \cdot \bigl( f(F_{\tilde{A}},\dots,F_{\tilde{A}}) \bigr)_{o}.
\]
\end{claim*}

We will prove the claim by a direct calculation. First, we note that it follows from the definitions in a straightforward way that, for $iX_{1},\ldots,iX_{2k}\in i\,\pfr \cong T_{o_{u}}\X_{u}$, we have
\begin{multline*}
(f(F_{A_{u}},\dots,F_{A_{u}}) \bigr)_{o_{u}}(iX_{1},\ldots,iX_{2k}) \\
	= \frac{1}{(2k)!} \sum_{\sigma \in \mathfrak S_{2k}} (-1)^\sigma f((F_{A_{u}})_{e}(iX_{\sigma(1)},iX_{\sigma(2)}),\dots,(F_{A_{u}})_{e}(iX_{\sigma(2k-1)},iX_{\sigma(2k)})),	
\end{multline*}
where $e\in G_{u}$ is the unit element. On the left hand side of this identity we regard $F_{A_{u}}$ as a form on $\X_{u}$ whereas on the right hand side we regard it as a form on $G_{u}$. Likewise, for $X_{1},\ldots,X_{2k}\in \pfr \cong T_{o}\X$ we have
\begin{multline*}
(f(F_{\tilde{A}},\dots,F_{\tilde{A}}))_{o}(X_{1},\ldots,X_{2k})\\ = \frac{1}{(2k)!} \sum_{\sigma \in \mathfrak S_{2k}} (-1)^\sigma f((F_{\tilde{A}})_{e}(X_{\sigma(1)},X_{\sigma(2)}),\dots, (F_{\tilde{A}})_{e}(X_{\sigma(2k-1)},X_{\sigma(2k)})),
\end{multline*}
where $e\in G$ is the unit element of $G$. Again, on the left hand side of this identity $F_{\tilde{A}}$ is regarded as a form on $\X$ whereas on the right hand side it is considered as a form on $G$. Since $f$ is $k$-linear we see from this and the definition of the isomorphism (\ref{MapIsoiota}) that it will be enough to establish the relation
\[
\bigl( F_{A_{u}} \bigr)_{e}(iX,iY) = - \bigl( F_{\tilde{A}} \bigr)_{e}(X,Y)
\]
for $X,Y\in\pfr$. To this end, we recall that the Maurer-Cartan form on $G_{u}$ satisfies the identities
\[
(\th_{G_{u}})_{o_{u}}(iX) = iX, \, X \in \pfr \jump \text{and} \jump \dop\!\th_{G_{u}} + \frac{1}{2} \bigl[ \th_{G_{u}} \wedge \th_{G_{u}} \bigr] = 0.
\]
Then we obtain
\[
\begin{split}
	\bigl( F_{A_{u}} \bigr)_{e}(iX, iY) &= \left( \bigl( dA_{G_{u}} \bigr)_{e} + \frac{1}{2} \bigl[ A_{G_{u}} \wedge A_{G_{u}} \bigr]_{e} \right) (iX,iY) \\
&= \left( \pi_{\kfr}^{u} \circ \bigl( \dop\!\th_{G_{u}} \bigr)_{e} + \frac{1}{2} \bigl[ \pi_{\kfr}^{u} \circ \dop\!\th_{G_{u}} \wedge \pi_{\kfr}^{u} \circ \dop\!\th_{G_{u}} \bigr]_{e} \right)(iX,iY) \\
&= \frac{1}{2} \left( -\pi_{\kfr}^{u} \circ \bigl[ \th_{G_{u}} \wedge \th_{G_{u}} \bigr]_{e} + \bigl[ \pi_{\kfr}^{u} \circ \dop\!\th_{G_{u}} \wedge \pi_{\kfr}^{u} \circ \dop\!\th_{G_{u}} \bigr]_{e} \right)(iX,iY) \\
&= - \frac{1}{2} \bigl[ \th_{G_{u}} \wedge \th_{G_{u}} \bigr]_{e} (iX,iY) \\
&= - \frac{1}{2} \Bigl( \bigl[ (\th_{G_{u}})_{e}(iX),(\th_{G_{u}})_{e}(iY) \bigr] - \bigl[ (\th_{G_{u}})_{e}(iY),(\th_{G_{u}})_{e}(iX) \bigr] \Bigr) \\
&= - [iX,iY] \\
&= [X,Y].
\end{split}
\]
A similar computation shows that
\[
\bigl( F_{\tilde{A}} \bigr)_{e}(X,Y) = -[X,Y].
\]
This proves the claim.

Continuing with the proof of the proposition, we note that since the Maurer-Cartan form $\th_{G} \in \Om^{1}(G;\kfr)$ is $G$-invariant, it follows that the connection 1-form $\tilde{A} \in\Om^{1}(G;\kfr)$ and hence also the form $f(F_{\tilde{A}},\dots,F_{\tilde{A}}) \in\Om^{2k}(\X)$ on $\X$ is $G$-invariant. Hence the $2k$-linear form $\bigl( f(F_{\tilde{A}},\dots,F_{\tilde{A}}) \bigr)_{o}$ on $\pfr$ is mapped to the form
\[
f(F_{\tilde{A}},\dots,F_{\tilde{A}}) \in\Om^{2k}(\X)^{G}.
\]
This form is in particular $\Ga$-invariant, so it follows from Lemma \ref{LemmaChernWeil} that it gets mapped under (\ref{MapFromXToM}) to the characteristic class $c(\Ga\backslash G)$. The proposition is proved.
\end{proof}

\subsection{A proportionality principle}
\label{SubSectionAProportionalityPrinciple}

In order to clarify the relation between our results and the generalized Hirzebruch proportionality principle obtained in \cite[Sec.\,4.14]{KamberTondeur}, we derive a corollary of Proposition \ref{PropositionDualityOfCharacteristicClasses} concerning characteristic numbers. For any compact oriented manifold $X$ of dimension $m$, we denote by
\[
\lmap{\l\,\cdot\,,[X]\r}{H^{m}(X;\R)}{\R}
\]
the pairing of classes of top degree in cohomology with the fundamental class $[X] \in H_{m}(X;\R)$ in the singular homology of $X$. Then we have the following proportionality principle.

\begin{corollary}
	Write $m \deq \dim(M) = \dim(\X_{u})$, and fix an orientation of $\X_{u}$ and $M$. Then there exists a real number $a(\Gamma) \neq 0$ such that for any collection $c_{1},\dots,c_{r} \in H^{\bullet}(BK;\R)$ of characteristic classes satisfying
\[
\deg(c_{1}) + \dots + \deg(c_{r}) = m,
\]
we have
\[
\big\l c_{1}(G_{u}) \wedge \dots \wedge c_{r}(G_{u}),[\X_{u}] \big\r = a(\Gamma) \cdot \big\l c_{1}(\Ga \backslash G) \wedge \dots \wedge c_{r}(\Ga \backslash G),[M] \big\r.
\]
\end{corollary}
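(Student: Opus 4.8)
The plan is to derive the corollary from Proposition~\ref{PropositionDualityOfCharacteristicClasses} by reducing everything to top degree. The first thing I would record is that $\Phi_{\Ga} = \pi_! \circ \Phi_G$ is a homomorphism of graded algebras. Each of the isomorphisms (\ref{MapIsosHodge}), (\ref{XuAlgebraic}), (\ref{MapIsoiota}) making up $\Phi_G$ is multiplicative: restriction of invariant forms to the basepoint and the pullback $\io^*$ along the linear map $\io\colon \pfr \to i\,\pfr$ are algebra maps, while the identification of $H^\bullet(\X_u;\R)$ with the $G_u$-invariant (hence harmonic, hence closed) forms carries cup product to wedge product. Likewise $\pi_!$ is multiplicative, since $G$-invariant forms on $\X$ are closed and their descent to $M$ followed by passage to de Rham cohomology respects wedge products. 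Thus $\Phi_{\Ga}(\al \wedge \be) = \Phi_{\Ga}(\al) \wedge \Phi_{\Ga}(\be)$.

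Next, because the classifying-map pullback is a ring homomorphism, $c_i(G_u) = f_{G_u}^*c_i$ and $c_1(G_u)\wedge\dots\wedge c_r(G_u) = (c_1 \cup \dots \cup c_r)(G_u)$, and similarly over $M$. By Lemma~\ref{LemmaChernWeil}(1) every $c_i$ has even degree $2k_i$, so the hypothesis reads $\sum_i 2k_i = m$; in particular $m$ is even whenever a nonempty admissible collection exists (if $m$ is odd the statement is vacuous and any nonzero $a(\Gamma)$ works). Applying Proposition~\ref{PropositionDualityOfCharacteristicClasses} to each factor and using multiplicativity of $\Phi_{\Ga}$, I would obtain
\[
\Phi_{\Ga}\bigl(c_1(G_u)\wedge\dots\wedge c_r(G_u)\bigr) = (-1)^{k_1+\dots+k_r}\,c_1(\Ga\backslash G)\wedge\dots\wedge c_r(\Ga\backslash G) = (-1)^{m/2}\,c_1(\Ga\backslash G)\wedge\dots\wedge c_r(\Ga\backslash G),
\]
the crucial point being that the total sign depends only on $m$ and not on the individual degrees.

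It then remains to compare the two evaluation functionals. Both $\X_u$ and $M$ are compact, connected, oriented $m$-manifolds, so integration gives isomorphisms $\langle\,\cdot\,,[\X_u]\rangle$ and $\langle\,\cdot\,,[M]\rangle$ of the one-dimensional spaces $H^m(\X_u;\R)$ and $H^m(M;\R)$ with $\R$. Since $\Phi_{\Ga}$ is linear between these one-dimensional spaces, there is a single scalar $b(\Gamma)$ with $\langle \Phi_{\Ga}\al,[M]\rangle = b(\Gamma)\,\langle \al,[\X_u]\rangle$ for every $\al \in H^m(\X_u;\R)$. Feeding $\al = c_1(G_u)\wedge\dots\wedge c_r(G_u)$ into this relation together with the displayed identity and setting $a(\Gamma) \deq (-1)^{m/2}/b(\Gamma)$ yields the asserted proportionality, valid uniformly over all admissible collections; note that $a(\Gamma)$ is manifestly independent of the choice of $c_1,\dots,c_r$.

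The one point requiring genuine argument — and the step I expect to be the main obstacle — is the nonvanishing $a(\Gamma)\neq 0$, equivalently $b(\Gamma)\neq 0$, i.e. that $\Phi_{\Ga}$ is an isomorphism in top degree. For this I would trace a top class through $\Phi_{\Ga} = \pi_!\circ\Phi_G$: as $\Phi_G$ is an isomorphism onto $\Omega^\bullet(\X)^G$, a generator of $H^m(\X_u;\R)$ maps to a nonzero $G$-invariant $m$-form on $\X$, and the space of such forms is one-dimensional, spanned by the Riemannian volume form. Under $\pi_!$ this descends to a nonzero multiple of the volume form of the \emph{compact} manifold $M$, whose class pairs with $[M]$ to give $\pm\vol(M)\neq 0$. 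Hence $\Phi_{\Ga}$ does not vanish in degree $m$, so $b(\Gamma)\neq 0$, which completes the proof.
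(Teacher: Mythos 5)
Your proof is correct and follows the same basic strategy as the paper's: reduce to Proposition~\ref{PropositionDualityOfCharacteristicClasses} together with the one-dimensionality of $H^{m}(\X_{u};\R)$ and $H^{m}(M;\R)$. There are two points where you diverge. First, you apply the duality proposition to each factor $c_{i}$ separately and then invoke multiplicativity of $\Phi_{\Ga}$; the paper instead applies the proposition \emph{once} to the single class $c = c_{1} \cup \dots \cup c_{r} \in H^{m}(BK;\R)$, using only that $f_{G_{u}}^{\ast}$ and $f_{\Ga\backslash G}^{\ast}$ are ring maps, which sidesteps the need to check that each of the identifications (\ref{MapIsosHodge}), (\ref{XuAlgebraic}), (\ref{MapIsoiota}) and $\pi_{!}$ respects products (your checks are all valid, but they are extra work the paper avoids). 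Second, for the nonvanishing of the proportionality constant the paper argues formally: it writes $\l\,\cdot\,,[\X_{u}]\r = a'\cdot\l\Phi_{\Ga}(\,\cdot\,),[M]\r$ and observes that $a'\neq 0$ because the left-hand functional is nonzero; your volume-form argument (a generator of $H^{m}(\X_{u};\R)$ goes under $\Phi_{G}$ to a nonzero $G$-invariant $m$-form on $\X$, i.e.\ a multiple of the Riemannian volume form, which descends to $M$ and pairs nontrivially with $[M]$) is more explicit and in fact patches the small logical gap in the paper's phrasing, namely that the existence of $a'$ in that direction already presupposes $\l\Phi_{\Ga}(\,\cdot\,),[M]\r\neq 0$. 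So your route is slightly longer but, on the nonvanishing point, more self-contained.
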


\begin{proof}
	By the assumption and Lemma \ref{LemmaChernWeil}\,(i) we may without loss of generality assume $m$ to be even. Since $H^{m}(\X_{u};\R) \cong \R$ there exists a real number $a^{\prime}$ such that the linear functionals
\[
\lmap{\big\l\,\cdot\,,[\X_{u}]\big\r,\big\l\Phi_{\Ga}(\,\cdot\,),[M]\big\r}{H^{m}(\X_{u};\R)}{\R}
\]
are related by
\[
\big\l\,\cdot\,,[\X_{u}]\big\r = a^{\prime} \cdot \big\l\Phi_{\Ga}(\,\cdot\,),[M]\big\r.
\]
Then Proposition \ref{PropositionDualityOfCharacteristicClasses} yields
\[
\begin{split}
	\big\l c_{1}(G_{u}) \wedge \dots \wedge c_{r}(G_{u}),[\X_{u}] \big\r &= a^{\prime} \cdot \big\l \Phi_{\Ga}\bigl( c_{1}(G_{u}) \wedge \dots \wedge c_{r}(G_{u}) \bigr),[M] \bigr) \big\r \\
	&= (-1)^{m/2} \cdot a^{\prime} \cdot \big\l c_{1}(\Ga \backslash G) \wedge \dots \wedge c_{r}(\Ga \backslash G),[M] \big\r.
\end{split}
\]
This shows in particular that $a^{\prime} \neq 0$. Now define $a(\Gamma) \deq (-1)^{m/2} \cdot a^{\prime}$.
\end{proof}

\section{Geometric vs. universal map}\label{SectionExplicitDescription}

The goal of this section is to identify the universal map $\sig_{G}$ and the geometric map $T_{G}$ up to sign. This will prove Proposition \ref{MainConvenient}, thereby completing the proof of Theorem \ref{MainThm2}.

\subsection{Reformulation of the statement}\label{SubSectionTransferMap}\label{SubSectionResult}

Throughout this section, $G$ denotes a semisimple Lie group without compact factors and with finite center. We keep the notation introduced in the last section. In particular, given a subgroup $H < G$ we denote by $\iota_H: H \hookrightarrow G$ the corresponding inclusion map.

Since $K$ is a maximal compact subgroup in $G$, the inclusion $\iota_K: K \hookrightarrow G$ is a homotopy equivalence and thus induces an isomorphism
\[
\lmap{(B_{\ast}\iota_{K})^{\ast}}{H^{\bullet}(B_{\ast}G;\R)}{H^{\bullet}(B_{\ast}K;\R)}.
\]
Together with the isomorphism $\lmap{\Psi_{G}}{H_{c}^{\bullet}(G;\R)}{H^{\bullet}(\X_{u};\R)}$ from \eqref{CompactModel2} in Section\,\ref{SubsecVanEst}, this can be used to intertwine the classifiying map $\map{f_{G_{u}}}{\X_{u}}{B_{\ast}K}$ of the canonical $K$-bundle $G_{u} \to \X_{u}$ with the geometric map
\begin{eqnarray*} \label{MapTransferMap}
	\lmap{T_{G}}{H^{\bullet}(B_{\ast}G;\R)}{H_{c}^{\bullet}(G;\R)}.
\end{eqnarray*}

We would like to compare $T_G$ to the universal map $\sigma_G$.

In odd degrees we have $H^{2k+1}(B_{\ast}K;\R) = \{0\}$ by Lemma \ref{LemmaChernWeil}, whence $T_G = \sigma_G$ for trivial reasons.

In even degree we claim that
\begin{eqnarray} \label{PreciseSigns}
	\sigma_{G} =	(-1)^{k} \cdot T_{G}: H^{2k}(B_{\ast}G;\R) \to H_{c}^{2k}(G;\R).
\end{eqnarray}
This refines the statement of Proposition \ref{MainConvenient}. We see from Corollary \ref{NaturalCharacterized} that in order to prove (\ref{PreciseSigns}) it actually suffices to show the following proposition.

\begin{proposition} \label{PropositionExplicitDescriptionOfNaturalTransformation} The diagram
\[
\begin{xy}\xymatrix{
	H^{2k}(B_*G;\R) \ar[d]_{(B_*\iota_\Gamma)^*} \ar[rr]^{(-1)^k \cdot T_G} && H^{2k}_c(G;\R) \ar[d]^{\iota_\Gamma^*} \\
	H^{2k}(B_*\Gamma;\R) \ar[rr]^{\sigma_\Gamma} && H^{2k}(\Gamma;\R) \\
}\end{xy}
\]
commutes. 
\end{proposition}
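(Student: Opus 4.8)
The plan is to verify commutativity of the square by translating every map into the concrete geometric language of Chern--Weil theory and the locally symmetric space $M$, where Proposition~\ref{PropositionDualityOfCharacteristicClasses} can be applied directly. First I would trace the geometric map $T_G$ through its definition \eqref{T_Gexplicit}: starting from a characteristic class $c \in H^{2k}(B_*G;\R) \cong H^{2k}(B_*K;\R)$, the composite $\Psi_G^{-1} \circ T_G$ sends $c$ to $c(G_u) = f_{G_u}^* c \in H^{2k}(\X_u;\R)$, the characteristic class of the canonical $K$-bundle $G_u \to \X_u$. Thus, following the left-then-bottom route of the diagram and using Corollary~\ref{LatticeInclusion}, the element $i_M^{-1} \circ \iota_\Gamma^* \circ (-1)^k T_G (c)$ becomes $(-1)^k \cdot \Phi_\Gamma\bigl(c(G_u)\bigr)$, which by Proposition~\ref{PropositionDualityOfCharacteristicClasses} equals $c(\Gamma \backslash G) \in H^{2k}(M;\R)$, the characteristic class of the canonical $K$-bundle $\Gamma \backslash G \to M$.

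It therefore remains to identify the other route $\sigma_\Gamma \circ (B_*\iota_\Gamma)^*(c)$ with this same characteristic class $c(\Gamma \backslash G)$. The key observation here is that $\sigma_\Gamma$ is an isomorphism (as established in Section~\ref{SubsecExistenceUniversal}, since for discrete groups the sheaves $\underline{\R}$ and $C^0$ coincide), so the bottom-left composite is governed purely by the classifying-space level. The plan is to exploit the functoriality of the Milnor classifying-space construction together with the fact that the $K$-bundle $\Gamma \backslash G \to M$ is classified, up to the homotopy equivalences involved, by the composite $M \simeq B\Gamma \to B_*G \simeq B_*K$. Concretely, the inclusion $\iota_\Gamma \colon \Gamma \hookrightarrow G$ together with the homotopy equivalence $\iota_K \colon K \hookrightarrow G$ means that $(B_*\iota_\Gamma)^*$ followed by the identification $B\Gamma \simeq M$ realizes exactly the pullback $f_{\Gamma \backslash G}^*$ of the classifying map of the bundle $\Gamma \backslash G \to M$. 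Under the standard identification $H^\bullet(B\Gamma;\R) \cong H^\bullet(M;\R)$ and $\sigma_\Gamma$, this sends $c$ to $f_{\Gamma \backslash G}^* c = c(\Gamma \backslash G)$, matching the other route.

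The main obstacle I expect is the careful bookkeeping at the classifying-space level: one must check that the homotopy class of the classifying map of $\Gamma \backslash G \to M$ genuinely coincides with the map induced by $B_*\iota_\Gamma$ under the identifications $M \simeq B\Gamma$ and $B_*G \simeq B_*K$, and that the isomorphism $\sigma_\Gamma$ intertwines the singular-cohomology description of characteristic classes with the $(\gfr,K)$-cohomology description built into $\Psi_G$ and $\Phi_\Gamma$. This is essentially a compatibility statement between the abstract classifying-space definition of $c(\Gamma \backslash G)$ and the Chern--Weil de Rham representative used in Proposition~\ref{PropositionDualityOfCharacteristicClasses}; the two agree by Lemma~\ref{LemmaChernWeil}, but aligning the various models (simplicial sheaf cohomology for $\sigma_G$, de Rham theory for $\Phi_\Gamma$, and singular cohomology of $BK$) requires attention. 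Once the classifying map is correctly identified, the sign $(-1)^k$ is supplied entirely by Proposition~\ref{PropositionDualityOfCharacteristicClasses}, and the commutativity of the square—hence, via Corollary~\ref{NaturalCharacterized}, the identity \eqref{PreciseSigns}—follows.
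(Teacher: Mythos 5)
Your proposal follows essentially the same route as the paper: reduce the commutativity of the square to the duality statement of Proposition~\ref{PropositionDualityOfCharacteristicClasses} together with the identification of the bottom-left composite with $f_{\Gamma\backslash G}^{\ast}$ via the classifying-space picture. The ``main obstacle'' you flag --- that $B_{\ast}\iota_K \circ f_{\Gamma\backslash G}$ and $B_{\ast}\iota_\Gamma \circ f_{\X}$ are homotopic --- is exactly the lemma the paper proves, by exhibiting an explicit isomorphism of $G$-bundles $(\Gamma\backslash G)\times_K G \cong \Gamma\backslash(\X\times G)$ over $M$; carrying out that bundle isomorphism is the only piece your outline leaves to be filled in.
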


The proof of this proposition will occupy the rest of this article.

\subsection{Proof of Proposition \ref{PropositionExplicitDescriptionOfNaturalTransformation}} \label{SubSectionProofOfProposition}
	As before, we denote by $M := \Gamma \backslash G/K$ locally symmetric space associated to the fixed auxiliary lattice $\Gamma$ in $G$. We denote by $\map{f_{G_{u}}}{\X_{u}}{B_{\ast}K}$ the classifying map of the $K$-bundle $G_{u} \to \X_{u}$, and by \mbox{$\map{f_{\Ga\backslash G}}{M}{B_{\ast}K}$} the classifying map of the $K$-bundle $\Ga\backslash G \to M$. For every characteristic class $c \in H^{2k}(B_{\ast}K;\R)$ we obtain characteristic classes
\[
c(G_{u}) = f^{\ast}_{G_{u}}c \in H^{2k}(\X_{u};\R) \jump \text{and} \jump c(\Ga\backslash G) = f^{\ast}_{\Ga\backslash G} c \in H^{2k}(M;\R)
\]
of the bundles $G_{u} \to \X_{u}$ and $\Ga\backslash G \to M$, respectively. Now Proposition \ref{PropositionDualityOfCharacteristicClasses} implies
\[
f^{\ast}_{\Ga\backslash G} c = c(\Ga\backslash G) = (-1)^{k} \cdot \Phi_{\Ga} \bigl( c(G_{u}) \bigr) = \Phi_{\Ga} \circ \bigl( (-1)^{k} \cdot f^{\ast}_{G_{u}} \bigr) c,
\]
thus the triangle
\begin{eqnarray} \label{DiagramCommutativeTriangle}
\begin{xy}
\xymatrix{
& H^{2k}(B_{\ast}K;\R) \ar[ld]_{(-1)^{k} \cdot f_{G_{u}}^{\ast} \quad} \ar[rd]^{f_{\Ga\backslash G}^{\ast}} & \\
H^{2k}(\X_{u};\R) \ar[rr]^{\Phi_{\Ga}} && H^{2k}(M;\R)
}
\end{xy}
\end{eqnarray}
commutes. Combining this with Corollary \ref{LatticeInclusion} we obtain a commutative diagram
\begin{eqnarray} \label{DiagramBigCommutativeSquare}
\begin{xy}
\xymatrix{
H^{2k}(B_{\ast}K;\R) \ar[rr]^{(-1)^{k} \cdot f_{G_{u}}^{\ast}} \ar[d]_{f_{\Ga\backslash G}^{\ast}} && \ar[r]^{\Psi_G} H^{2k}(\X_{u};\R) \ar[lld]_{\Phi_{\Ga}} & H^{2k}(G;\R) \ar[d]^{\io_{\Ga}^{\ast}} \\
H^{2k}(M;\R) \ar[rrr]^{i_M} &&& H^{2k}(\Ga;\R)
}
\end{xy}
\end{eqnarray}
where $\mapin{\io_{\Ga}}{\Ga}{G}$ denotes the inclusion. (In order to keep the diagram readable we do not include the de Rham isomorphisms.) The last step in our argument requires the following lemma. We denote by $\map{f_{\X}}{M}{B_{\ast}\Ga}$ the classifying map of the $\Ga$-bundle $\X \to M$.
\begin{lemma}
	The diagram
\begin{eqnarray} \label{DiagramCommutativeSquare}
\begin{xy}
\xymatrix{
H^{2k}(B_{\ast}G;\R) \ar[d]_{(B_{\ast}\io_{\Ga})^{\ast}} \ar[rr]^{(B_{\ast}\iota_{K})^{\ast}} && H^{2k}(B_{\ast}K;\R) \ar[d]^{f_{\Ga\backslash G}^{\ast}} \\
H^{2k}(B_{\ast}\Ga;\R) \ar[rr]^{f_{\X}^{\ast}} && H^{2k}(M;\R)
}
\end{xy}
\end{eqnarray}
commutes.
\end{lemma}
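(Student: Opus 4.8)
The plan is to recognize both composites around the square as pullbacks along classifying maps of two associated principal $G$-bundles over $M$, and then to prove directly that these two bundles are isomorphic; the commutativity of the square is then just homotopy-invariance of the pullback on cohomology.

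First I would record the functoriality that drives the argument. Since $B_*$ is a functorial model of the classifying space (and coincides with the Milnor model), a continuous homomorphism $\phi: H \to G$ induces $B_*\phi: B_*H \to B_*G$, and for any principal $H$-bundle $P \to X$ with classifying map $f_P: X \to B_*H$, the associated principal $G$-bundle $P \times_\phi G \to X$ has classifying map $B_*\phi \circ f_P$ (up to homotopy). Applied to $\phi = \iota_K: K \to G$ and the principal $K$-bundle $\Gamma\backslash G \to M$, this gives $f_{\Gamma\backslash G}^* \circ (B_*\iota_K)^* = (B_*\iota_K \circ f_{\Gamma\backslash G})^*$, the pullback along the classifying map of $(\Gamma\backslash G)\times_K G \to M$; applied to $\phi = \iota_\Gamma: \Gamma \to G$ and the principal $\Gamma$-bundle $\X \to M$, it gives the other composite $f_\X^* \circ (B_*\iota_\Gamma)^* = (B_*\iota_\Gamma \circ f_\X)^*$, the pullback along the classifying map of $\X \times_\Gamma G \to M$. (Here $\X \to M$ and $\Gamma\backslash G \to M$ are genuine principal bundles because $\Gamma$ is torsion-free, so $\Gamma$ acts freely on $\X$ and $K$ acts freely on $\Gamma\backslash G$.) Thus the lemma reduces to showing that $(\Gamma\backslash G)\times_K G$ and $\X \times_\Gamma G$ are isomorphic as principal $G$-bundles over $M$.

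To exhibit this isomorphism I would introduce the common total space $E := \Gamma\backslash(G \times G)/K$, where $G$ acts by right multiplication on the second factor, $K$ acts by $k\cdot(a,b) = (ak^{-1}, kb)$, and $\Gamma$ acts by left multiplication on the first factor. These three actions commute pairwise, the residual $G$-action on $E$ is free (again using torsion-freeness of $\Gamma$), and $E/G = M$, so $E \to M$ is a principal $G$-bundle. Carrying out the quotients in the two possible orders yields the two descriptions: quotienting first by $\Gamma$ produces $(\Gamma\backslash G)\times G$ and then by $K$ gives $(\Gamma\backslash G)\times_K G$, whereas quotienting first by $K$ gives $(G\times G)/K \cong \X \times G$ via $(a,b)\mapsto (aK, ab)$, under which the residual $\Gamma$-action becomes the diagonal action $\gamma\cdot(xK,c) = (\gamma xK, \gamma c)$ and the $G$-action remains right multiplication on the second factor, so the further quotient is $\X \times_\Gamma G$. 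Hence $(\Gamma\backslash G)\times_K G \cong E \cong \X \times_\Gamma G$ as principal $G$-bundles over $M$.

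Since isomorphic principal $G$-bundles over the manifold $M$ have homotopic classifying maps into $B_*G$, the maps $B_*\iota_K \circ f_{\Gamma\backslash G}$ and $B_*\iota_\Gamma \circ f_\X$ are homotopic, and pulling back on $H^{2k}(\,\cdot\,;\R)$ gives the asserted commutativity. The step requiring the most care is the bookkeeping of left/right conventions for the three commuting actions on $G \times G$, so that the identifications $E \cong (\Gamma\backslash G)\times_K G$ and $E \cong \X \times_\Gamma G$ match exactly the associated-bundle constructions attached to $\iota_K$ and $\iota_\Gamma$; once the conventions are pinned down, functoriality of $B_*$ and homotopy-invariance of classifying maps finish the proof formally.
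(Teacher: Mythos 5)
Your proposal is correct and follows essentially the same route as the paper: reduce the commutativity of the square to an isomorphism of the two associated principal $G$-bundles $(\Gamma\backslash G)\times_K G$ and $\Gamma\backslash(\X\times G)$ over $M$, and exhibit that isomorphism explicitly via $(g_1,g_2)\mapsto(g_1,g_1g_2)$ (your map $(a,b)\mapsto(aK,ab)$ after quotienting). Your packaging through the common double quotient $E=\Gamma\backslash(G\times G)/K$ is only a cosmetic variation on the paper's two pullback diagrams.
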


\begin{proof}
	To show that diagram (\ref{DiagramCommutativeSquare}) is commutative it suffices to prove that the square
\[
\begin{xy}
\xymatrix{
B_{\ast}K \ar[rr]^{B_{\ast}\iota_{K}} && B_{\ast}G \\
M \ar[u]^{f_{\Ga\backslash G}} \ar[rr]^{f_{\X}} && B_{\ast}\Ga \ar[u]_{B_{\ast}\io_{\Ga}}
}
\end{xy}
\]
is commutative up to homotopy. So we have to show that there is a homotopy
\[
B_{\ast}\io_{K} \circ f_{\Ga\backslash G} \,\simeq\, B_{\ast}\io_{\Ga} \circ f_{\X}
\]
between maps from $M$ to $B_{\ast}G$. Since homotopy classes of maps from $M$ to $B_{\ast}G$ are in one-to-one correspondence with isomorphism classes of $G$-bundles over $M$ this is in turn equivalent to the existence of an isomorphism of $G$-bundles
\begin{eqnarray} \label{MapPullBackBundlesIsomorphic}
	(B_{\ast}\io_{K} \circ f_{\Ga\backslash G})^{\ast}EG \,\cong\, (B_{\ast}\io_{\Ga} \circ f_{\X})^{\ast}EG
\end{eqnarray}
over $M$. Now we see from the pullback diagrams
\[
\begin{xy}
\xymatrix{
	(\Ga\backslash G) \xop_{K} G \ar[r] \ar[d] & E_{\ast}K \xop_{K} G \ar[r]\ar[d] & E_{\ast}G \ar[d] \\
	M \ar[r]^{f_{\Ga\backslash G}} & B_{\ast}K \ar[r]^{B_{\ast}\io_{K}} & B_{\ast}G
}
\end{xy}
\]
and
\[
\begin{xy}
\xymatrix{
	\Ga \backslash (\X \times G) \ar[r] \ar[d] & E_{\ast}\Ga \xop_{\Ga} G \ar[r] \ar[d] & E_{\ast}G \ar[d] \\
	M \ar[r]^{f_{\X}} & B_{\ast}\Ga \ar[r]^{B_{\ast}\io_{\Ga}} & B_{\ast}G
}
\end{xy}
\]
that 
\[
(B_{\ast}\io_{K} \circ f_{\Ga\backslash G})^{\ast}EG \,\cong\, (\Ga\backslash G) \xop_{K} G \jump \text{and} \jump (B_{\ast}\io_{\Ga} \circ f_{\X})^{\ast}EG \,\cong\, \Ga \backslash (\X \xop G)
\]
as $G$-bundles over $M$. Here, in the second diagram the quotient $\Ga \backslash (\X \times G)$ is taken with respect to the diagonal action induced by the standard left action of $\Ga$ on $\X=G/K$ and $G$. Thus (\ref{MapPullBackBundlesIsomorphic}) is a consequence of the following fact:

\begin{claim*}
The bundles $(\Ga \backslash G) \xop_{K} G$ and $\Ga \backslash (\X \times G)$ are isomorphic as $G$-bundles over $M$.
\end{claim*}

To prove the claim we write down the isomorphism explicitly. Let us use the notation
\[
\lmap{\pi_{1}}{P_{1} \deq (\Ga \backslash G) \xop_{K} G}{M}, \jump [\Ga g_{1},g_{2}] \mapsto \Ga g_{1}K
\]
and
\[
\lmap{\pi_{2}}{P_{2} \deq \Ga \backslash ((G/K) \times G)}{M}, \jump [g_{1}K,g_{2}] \mapsto \Ga g_{1}K
\]
for the two bundles. Then the map
\[
\lmap{\vphi}{G \xop G}{G \xop G}, \jump (g_{1},g_{2}) \mapsto (g_{1}, g_{1}g_{2}).
\]
descends to a map $\map{\overline{\vphi}}{P_{1}}{P_{2}}$. Similarly, the inverse
\[
\lmap{\vphi^{-1}}{G \xop G}{G \xop G}, \jump (g_{1},g_{2}) \mapsto (g_{1},g_{1}^{-1}g_{2})
\]
descends to a map $P_{2} \to P_{1}$. This proves the claim and finishes the proof of the lemma.
\end{proof}
Attaching diagram (\ref{DiagramCommutativeSquare}) to diagram (\ref{DiagramBigCommutativeSquare}) from the left, we obtain a commutative diagram
\[
\begin{xy}
\xymatrix{
H^{2k}(B_{\ast}G;\R) \ar[r]^{(B_{\ast}\io_{K})^{\ast}} \ar[d]_{(B_{\ast}\io_{\Ga})^{\ast}} & H^{2k}(B_{\ast}K;\R) \ar[rr]^{(-1)^{k} \cdot f_{G_{u}}^{\ast}} && \ar[r]^{\Psi_G} H^{2k}(\X_{u};\R) & H^{2k}(G;\R) \ar[d]^{\io_{\Ga}^{\ast}} \\
H^{2k}(B_{\ast}\Ga;\R) \ar[rrrr]_{\sigma_\Ga} &&&& H^{2k}(\Ga;\R)
}
\end{xy}
\]
By definition of the geometric map $T_{G}$, the upper row coincides with $(-1)^k \cdot T_G$. This finishes the proof of Proposition \ref{PropositionExplicitDescriptionOfNaturalTransformation}.

\bibliographystyle{amsplain}

\end{document}